\newenvironment{proof}{\noindent{\em \textbf{Proof.}}}{\quad \hfill$\Box$\vspace{2ex}}
\renewcommand{\Re}{\mathcal{R}}
\newcommand{\Ve}{\mathcal{V}}
\newtheorem{theorem}{Theorem}[section]
\newtheorem{definition}[theorem]{Definition}
\newtheorem{remark}[theorem]{Remark}
\newtheorem{corollary}[theorem]{Corollary}
\newtheorem{lemma}[theorem]{Lemma}
\newtheorem{example}[theorem]{Example}
\numberwithin{equation}{section}
\def \H {\mathbb{H}}
\def \C {\mathbb{C}}
\def\qi { \bm {i}}
\def\qj { \bm{j}}
\def\qk { \bm{k}}
\def \bphi {\bm{\phi}}
\def \bfunc {\bm{f}}
\def \bmeta {\bm{\eta}}
\def \bxi {\bm{\xi}}
\def \bp {\bm{p}}
\def \bx {\bm{x}}
\def \by {\bm{y}}
\newcommand{\norm}[1]{\left\lVert#1\right\rVert}
\newcommand{\abs}[1]{\left|#1\right|}
\title{Floquet Theory for Quaternion-valued Differential Equations}
	\author[a]{Dong Cheng\thanks{chengdong720@163.com}}
	\author[a]{Kit Ian Kou\thanks{kikou@umac.mo}}
	\author[b]{Yong Hui Xia \thanks{xiadoc@163.com}}
	\affil[a]{\normalsize{Department of Mathematics, Faculty of Science and Technology, University of Macau, Macao, China}}
	\affil[b]{\normalsize{Department of Mathematics, Zhejiang Normal University, Jinhua, China}}
	\date{}
\begin{document}
  \maketitle
\begin{abstract}
	\normalsize

 This paper describes the Floquet theory for quaternion-valued differential equations (QDEs). The Floquet normal form of   fundamental matrix for linear  QDEs  with periodic coefficients is presented and the stability of quaternionic periodic systems is accordingly studied. As an important application of Floquet theory,  we give a discussion on the stability of quaternion-valued Hill's equation. Examples are presented to illustrate  the proposed results.
\end{abstract}

 \begin{keywords}
Floquet theory, periodic systems, quaternion, non-commutativity, Hill's equation.
\end{keywords}

\begin{msc}
34D08, 34B30, 20G20.
\end{msc}

\section{Introduction}\label{S1}

The theory of quaternion-valued differential equations (QDEs) has  gained  a prominent attention  in recent years due to its applications in many fields, including spatial kinematic modelling and attitude dynamics \cite{chou1992quaternion,gupta1998linear}, fluid mechanics \cite{gibbon2002quaternionic,gibbon2006quaternions}, quantum mechanics \cite{alder1986quaternionic,adler1995quaternionic}, etc. A feature of quaternion skew field is  that the multiplication of   quaternion numbers is noncommutative, this property brings challenges to the study of QDEs. Therefore, although QDEs appear in many fields, the mathematical  researches in QDEs are not so many. Leo and Ducati \cite{de2003solving} solved some simple second order quaternionic differential equations by   using the real matrix representation of left/right acting quaternionic operators. Applying the topological degree methods,  Campos  and Mawhin \cite{campos2006periodic} initiated a  study of the $T$-periodic solutions of   quaternion-valued first order differential equations. Later, Wilczynski \cite{wilczynski2009quaternionic,wilczynski2012quaternionic} presented some sufficient conditions for the existence of at least two periodic solutions of the quaternionic Riccati  equation and the existence of at least one periodic solutions of the quaternionic polynomial equations. The existence of periodic orbits, homoclinic loops, invariant tori for 1D autonomous homogeneous QDE $\dot{q}=a q^n, (n=2,3)$ was proposed by Gasull \emph{et al.} \cite{gasull2009one}. The study   of Zhang \cite{zhang2011global}  is devoted to the global struture of   1D quaternion Bernoulli equations. Recently, the  basic theory and fundamental results of linear  QDEs  was established by Kou  and Xia \cite{kouxialinear2018,kou2015linear2,xia2016algorithm}. They proved that the algebraic structure of the solutions to QDEs is different from the classical case. Moreover, for lack of basic theory such as fundamental theorem of algebra, Vieta's formulas of quaternions, it is difficult to solve QDEs. In \cite{kouxialinear2018,kou2015linear2,xia2016algorithm,cheng2018unified}, the authors proposed several new methods to construct the fundamental matrices of linear QDEs.

As a generalization,   QDEs have many properties similar to ODEs. At the same time, for the  relatively complicated algebraic structure of quaternion, one may   encounter various new difficulties when studying QDEs.
\begin{enumerate}
	\item    Factorization theorem and Vieta's formulas (relations between the roots and  the coefficients) for  quaternionic polynomials are not valid (see e. g. \cite{eilenberg1944fundamental, serodio2001zeros,pogorui2004structure}).
	\item  A quaternion matrix usually has infinite number of eigenvalues. Besides, the set of all eigenvectors corresponding to a non-real eigenvalue is not a  module (see e. g. \cite{zhang1997quaternions,rodman2014topics}).
	\item The study of quaternion matrix  equations is of intricacy  (see e. g. \cite{wang2008common,wang2009ranks}).
	\item Even the quaternionic polynomials are not "regular" (an analogue concept of holomorphic). This fact leads to noticeable  difficulties for studying  analytical properties of quaternion-valued functions (see e. g. \cite{sudbery1979quaternionic,wilczynski2009quaternionic}).
\end{enumerate}

Up to present, the theory of QDEs  remains far from systemic. To the best of authors'  knowledge, there was virtually nonexistent   study about the stability theory of QDEs.  Based on this fact, we are motivated to investigate  the stability of the linear QDEs
\begin{equation}\label{homo linear systems}
\dot{x}=A(t)x
\end{equation}
where $A$ is a smooth $n\times  n$ quaternion-matrix-valued function. In particular, we will focus on the important special cases where $A$ is a quaternionic constant or periodic quaternion-valued function. In the real-valued systems, the  well-known Floquet theory indicates that the case where $A$ is  a  periodic  matrix-valued function   is reducible to the constant case (see e. g. \cite{chicone2006ordinary,hale2009ordinary}). Floquet theory is   an effective tool for  analyzing  the periodic solutions and  the stability of dynamic systems. Owing to its importance, Floquet theory has been extended in different directions. Johnson \cite{johnson1980floquet} generalized the Floquet theory to the almost-periodic systems. In \cite{chow1994floquet,kuchment1993floquet,kuchment1994behavior}, the authors extended  the Floquet theory   to  the partial differential equations. Recently, the Floquet theory has been extensively explored  for dynamic systems on time scales (see e. g. \cite{ahlbrandt2003floquet,dacunha2011unified,agarwal2014floquet,adivar2016floquet}).

As a continuation of   \cite{kouxialinear2018,kou2015linear2,xia2016algorithm}, we  generalize the Floquet theory to QDEs in this paper. Specifically, the contributions of this paper are summarized as follows.
\begin{enumerate}
	\item  We show that the stability of  constant coefficient homogeneous linear  QDEs is determined  by   the standard eigenvalues  of its coefficient matrix.
	\item    Floquet normal form of the fundamental matrix for linear  QDEs  with periodic coefficients is presented.
	\item  The monodromy matrix, characteristic multiplier and characteristic exponent for QDEs are defined. Moreover, the stability of quaternionic periodic systems is discussed.
	\item We propose some sufficient conditions for the existence of periodic solution of quaternionic periodic systems.
	\item Without question,  there are some results of ODEs are inevitably invalid for QDEs. We will discuss some of these results. Specifically, we will discuss the stability of quaternion-valued Hill's equation.
\end{enumerate}

The rest of the paper is organized as follow.  In Section \ref{S2}, some basic concepts of quaternion algebra  are reviewed. Besides, several lemmas of quaternion matrices are derived.   Section \ref{S3} is devoted to the stability of  constant coefficient linear  homogeneous QDEs.  In Section \ref{S4}, we establish the Floquet theory for QDEs. Specifically, Floquet normal form of the fundamental matrix for  quaternionic periodic systems is presented. Some important concepts  such as  monodromy matrix, characteristic multiplier and characteristic exponent for QDEs are defined and the stability of quaternionic periodic systems is accordingly studied.   The stability of quaternion-valued Hill's equation is discussed in Section \ref{S5}. Finally,  conclusions are drawn at the end of the paper.

\section{Preliminaries}\label{S2}

\subsection{Quaternion  algebra}\label{S2.1}

The quaternions  were first described by Hamilton  in 1843   \cite{sudbery1979quaternionic}. The algebra of quaternions is   denoted by
\begin{equation*}
\H:= \{q=q_0+q_1\qi  +q_2 \qj+ q_3\qk\}
\end{equation*}
where $q_0,q_1,q_2,q_3$ are real numbers and the elements $\qi$, $\qj$ and $\qk$ obey Hamilton's multiplication rules:
\begin{equation*}
\qi\qj=-\qj\qi=\qk,~~\qj\qk=-\qk\qj=\qi,~~\qk\qi=-\qi\qk=\qj,~~\qi^2=\qj^2=\qi\qj\qk=-1.
\end{equation*}
For every quaternion $q=q_0+\qi q_1+\qj q_2+\qk q_3$, the scalar and vector parts of $q$, are  defined as $\Re(q)=q_0$ and $\Ve(q)=q_1\qi  +q_2 \qj+ q_3\qk$, respectively. If $q= \Ve(q)$, then $q$ is called pure imaginary quaternion.
The quaternion conjugate is defined by $\overline{q}= q_0-\qi q_1-\qj q_2-\qk q_3$, and the norm $|q|$ of $q$ defined as
$|q|^2={q\overline{q}}={\overline{q}q}=\sum_{m=0}^{m=3}{q_m^2}$.
Using the conjugate and norm of $q$, one can define the inverse of $q\in\H\backslash\{0\}$ by $q^{-1}=\overline{q}/|q|^2$.
For each fixed unit pure imaginary quaternion $\bm \varsigma$, the quaternion has subset $\mathbb{C}_{\bm \varsigma}:=\{a+b \bm \varsigma  :a,b\in\mathbb{R}\}$. The  complex number field $\mathbb{C}$ can be viewed as a subset of $\H$ since  it is   isomorphic to $\mathbb{C}_{\qi}$.  Therefore we will denote $\mathbb{C}_{\qi}$ by $\mathbb{C}$ for simplicity.

\subsection{Matrices of quaternions}\label{S2.2}

The quaternion exponential function $ \exp(A)$ for $A\in \H^{n\times n}$ is defined by means of an infinite series as
\begin{equation*}
\exp({A}):=\sum_{n=0}^\infty \frac{A^n}{n!}.
\end{equation*}
When $n=1$ and $A=q\in \H$, analogous to the complex case one may derive a closed-form representation:
\begin{equation*}
e^{q}=\exp(q)= e^{q_0}\left(\cos| \Ve(q)|+\frac{ \Ve(q)}{| \Ve(q)|}\sin| \Ve(q)|\right).
\end{equation*}

Every   quaternion
matrix $A\in\H^{m\times n}$
can be expressed uniquely in the form of
\begin{equation*}
A=A_1+ A_2\qj, ~~~\text{where}~~  A_1,   A_2 \in \C^{m\times n}.
\end{equation*}
Then the \emph{complex adjoint matrix}  \cite{aslaksen1996quaternionic,zhang1997quaternions}  of the quaternion matrix $A$ is defined as
\begin{equation}\label{complex adjoint matrix}
\chi_A=\begin{pmatrix}
A_1&A_2\\
-\overline{A_2} & \overline{A_1}
\end{pmatrix}.
\end{equation}
By using the complex adjoint matrix, the $q$-determinant of $A$ is defined by
\begin{equation}\label{q-det}
\abs{A}_q:=\abs{\chi_A},
\end{equation}
where $\abs{\cdot}$ is the conventional determinant for complex matrices.
By direct computations, it is easy to see that $\abs{A}_q=\abs{A}^2$ when $A$ is a complex matrix.

From \cite{kouxialinear2018}, we know that $\H^n$ over the division ring $\H$ is a right $\H$-module (a similar concept to linear space) and $\bmeta_1,\bmeta_2,\cdot\cdot\cdot,\bmeta_k\in \H^n$ are right linearly independent if
\begin{equation*}
\bmeta_1\alpha_1+\bmeta_2\alpha_2+\cdot\cdot\cdot+\bmeta_k\alpha_k=0,\alpha_i\in \H~~ \text{implies that} ~~\alpha_1=\alpha_2=\cdot\cdot\cdot=\alpha_k=0.
\end{equation*}

Let $A\in\H^{n\times n}$, a   nonzero $\bmeta\in \H^{n\times 1}$ is said to be a \emph{right eigenvector} of $A$ corresponding to the \emph{right eigenvalue} $\lambda\in \H$  provided  that
\begin{equation*}
A\bmeta=\bmeta \lambda
\end{equation*}
holds. A matrix $A_1$ is said to be similar to a matrix $A_2$ if $A_2=S^{-1}AS$ for some nonsingular matrix $S$. In particular, we say that  two quaternions $p,q$ are similar if $p=\alpha^{-1}q\alpha$ for some nonzero quaternion $\alpha$. By Theorem 2.2 in \cite{zhang1997quaternions}, we know that the similarity of quaternions defines an equivalence relation. The set 
\begin{equation*}
[q]:= \{ p= \alpha^{-1}q\alpha: \alpha = \mathbb{H}\setminus \{0\}\}
\end{equation*} 
is called an equivalence class of $q$. It is easy to see that $[q]$ can also be recognized by
\begin{equation*}
[q]:= \{ p\in \mathbb{H}: \Re(p)=\Re(q), \abs{\Ve(p)}=\abs{\Ve(q)}\}.
\end{equation*} 
It follows that any equivalence class $[q]$ has one and only one complex-valued element with nonnegative imaginary part.

 We recall some basic results about   quaternion matrices which can be found, for instance, in \cite{zhang1997quaternions,baker1999right,rodman2014topics}.

\begin{theorem}\label{thm of q matrix}
	Let $A\in\H^{n\times n}$, then the following statements hold.
	\begin{enumerate}
		\item $A$  has exactly $n$ right eigenvalues (including multiplicity) which are  complex numbers with nonnegative imaginary parts. These eigenvalues are called standard eigenvalues of $A$.
		\item If $A$ is a complex matrix and its eigenvalues are  $\lambda_1=\alpha_1+\qi \beta_1, \lambda_2=\alpha_2+\qi \beta_2, \cdots,\lambda_n=\alpha_n+\qi \beta_n$ (repeated according to their multiplicity). Then the standard eigenvalues of $A$ are $\widetilde{\lambda}_1=\alpha_1+\qi \abs{\beta_1}, \widetilde{\lambda}_2=\alpha_2+\qi \abs{\beta_2}, \cdots,  \widetilde{\lambda}_n=\alpha_n+\qi \abs{\beta_n}$. In particular, $\abs{\widetilde{\lambda}_j}=\abs{{\lambda}_j}$ for $j=1,2,\cdots,n$.
		\item $A$ is invertible if and only if $\chi_A$ is invertible.
		\item  If $A$ is (upper or lower) triangular, then the only eigenvalues are the diagonal elements (and the quaternions  similar to them).
	\end{enumerate}
\end{theorem}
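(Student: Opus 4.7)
The plan is to use the complex adjoint matrix $\chi_A$ from \eqref{complex adjoint matrix} as the unifying device, since it translates quaternionic linear algebra into standard complex linear algebra on $\C^{2n}$. The central computational fact, which I would verify first by direct block multiplication, is that $\chi_{AB}=\chi_A\chi_B$ and $\chi_{A^{-1}}=(\chi_A)^{-1}$ when the inverse exists. This immediately yields item (3): one direction from $\chi_{AA^{-1}}=\chi_{I_n}=I_{2n}$, and the converse from checking that $(\chi_A)^{-1}$ retains the block structure $\begin{pmatrix}B_1 & B_2\\ -\overline{B_2}&\overline{B_1}\end{pmatrix}$ (by conjugating with the symplectic matrix $J=\begin{pmatrix}0 & I_n\\ -I_n & 0\end{pmatrix}$, since $\chi_A$ satisfies $J\chi_A = \overline{\chi_A} J$).

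For item (1), I would connect right eigenvalues of $A$ to ordinary eigenvalues of $\chi_A$. Writing $\bmeta=\bmeta_1+\bmeta_2\qj$ and $\lambda\in\C$, the identity $A\bmeta=\bmeta\lambda$ unfolds, after separating the $\qj$-free and $\qj$-part, into the complex equation $\chi_A\begin{pmatrix}\bmeta_1\\ -\overline{\bmeta_2}\end{pmatrix}=\lambda\begin{pmatrix}\bmeta_1\\ -\overline{\bmeta_2}\end{pmatrix}$. Conversely, any eigenvector of $\chi_A$ of this symmetric form produces a right eigenvector of $A$. Next I would use the observation that $\alpha^{-1}\lambda\alpha$ is a right eigenvalue whenever $\lambda$ is (with eigenvector $\bmeta\alpha$), so right eigenvalues come in full similarity classes; by the equivalence-class description $[q]=\{p:\Re p=\Re q,\,|\Ve p|=|\Ve q|\}$ recalled above, each class has a unique complex representative with nonnegative imaginary part. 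Pairing this with the fact that $J\chi_A=\overline{\chi_A}J$ forces the $2n$ eigenvalues of $\chi_A$ to come in conjugate pairs, one extracts exactly $n$ such representatives, counted with multiplicity.

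Item (2) is then almost immediate: if $A$ itself is complex, its classical eigenvectors serve as quaternionic right eigenvectors for the classical eigenvalues $\lambda_j=\alpha_j+\qi\beta_j$; passing to the standard representative of $[\lambda_j]$ replaces $\beta_j$ by $|\beta_j|$, and the equality $|\widetilde\lambda_j|^2=\alpha_j^2+\beta_j^2=|\lambda_j|^2$ is trivial. Item (4) I would handle by induction on $n$ for, say, upper triangular $A$: if $A\bmeta=\bmeta\lambda$ with $\bmeta\neq 0$, let $k$ be the largest index with $\bmeta_k\neq 0$; the $k$-th row reads $a_{kk}\bmeta_k=\bmeta_k\lambda$, forcing $\lambda=\bmeta_k^{-1}a_{kk}\bmeta_k\in[a_{kk}]$. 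Conversely, each diagonal entry does appear by constructing $\bmeta$ recursively from bottom to top (which is solvable because at each step one faces a Sylvester-type quaternionic equation that is always solvable when the coefficients avoid a single excluded class).

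The main obstacle I expect is the rigorous bookkeeping in item (1): showing that the map from right-eigenvector equations to eigenvector equations for $\chi_A$ genuinely captures all similarity classes exactly once, and in particular that the conjugate-pair structure of $\mathrm{spec}(\chi_A)$ forced by $J\chi_A=\overline{\chi_A}J$ gives precisely $n$ standard eigenvalues with the correct algebraic multiplicities. Everything else is essentially bookkeeping around the quaternion-complex correspondence.
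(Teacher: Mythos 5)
The paper gives no proof of this theorem at all --- it is recalled from the cited references (Zhang, Rodman, Baker) --- so your proposal is really being measured against the standard arguments in that literature, and your route via the complex adjoint $\chi_A$ is precisely the standard one. The core computations are right: the relation $J\chi_A=\overline{\chi_A}J$ with $J=\left(\begin{smallmatrix}0&I\\-I&0\end{smallmatrix}\right)$ does characterize the block structure of (\ref{complex adjoint matrix}) and settles item (3); writing $\bmeta=\bmeta_1+\bmeta_2\qj$, the equation $A\bmeta=\bmeta\lambda$ is indeed equivalent to $\chi_A(\bmeta_1,-\overline{\bmeta_2})^T=\lambda(\bmeta_1,-\overline{\bmeta_2})^T$ (and since every vector of $\C^{2n}$ has this form, every eigenvector of $\chi_A$ arises this way --- your ``of this symmetric form'' caveat is vacuous); the similarity-class observation $A(\bmeta\alpha)=(\bmeta\alpha)(\alpha^{-1}\lambda\alpha)$ and item (2) via $\chi_A=\mathrm{diag}(A,\overline{A})$ for complex $A$ are fine.

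Two steps still need to be closed. First, the obstacle you flag in item (1) is genuine: conjugation-symmetry of the spectrum of $\chi_A$ does not by itself show that a \emph{real} eigenvalue occurs with even multiplicity, and without that you cannot extract exactly $n$ standard eigenvalues with the right multiplicities. The fix is already implicit in your relation: the antilinear map $v\mapsto J\overline{v}$ sends $\lambda$-eigenvectors of $\chi_A$ to $\overline{\lambda}$-eigenvectors and squares to $-I$, so it preserves each generalized eigenspace of a real eigenvalue and forces its complex dimension to be even. Second, your bottom-up construction for the converse half of item (4) fails when two diagonal entries lie in the same similarity class (the quaternionic Sylvester equation $a_{ii}x-xa_{kk}=c$ is then not solvable for all $c$); the clean repair is to note that for triangular $A$ the matrix $\chi_A$ is permutation-similar to a block-triangular matrix with $2\times 2$ diagonal blocks $\chi_{a_{kk}}$, whose eigenvalues are $\Re(a_{kk})\pm\qi\abs{\Ve(a_{kk})}$, so the spectrum of $\chi_A$ is exactly $\bigcup_k\left([a_{kk}]\cap\C\right)$, giving both inclusions of item (4) at once.
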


Let $\Omega$ be the totality of all $2n\times 2n$ partitioned complex matrices which have form of (\ref{complex adjoint matrix}). It has been shown in  \cite{zhang2001jordan,rodman2014topics}  that $\Omega$ is closed under addition,  multiplication  and inversion. Furthermore, each $A\in \H^{n\times n}$ has a Jordan form in $\C^{n\times n}$.
\begin{lemma}\label{expmultiply}
	\cite{zhang2001jordan}  Let $A,B\in \H^{n\times n}$. Then $\chi_A+\chi_B =\chi_{A+B}\in \Omega$ and  $\chi_A\chi_B = \chi_{AB} \in \Omega$. Moreover, if $A$ is invertible, then $\chi_A^{-1}=\chi_{A^{-1}} \in \Omega$.
\end{lemma}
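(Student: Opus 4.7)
The plan is to verify each of the three identities directly from the definition of the complex adjoint matrix by writing $A = A_1 + A_2\qj$ and $B = B_1 + B_2\qj$ with $A_1,A_2,B_1,B_2 \in \C^{n\times n}$, then comparing block entries.

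First I would handle the additivity $\chi_{A+B}=\chi_A+\chi_B$, which is essentially immediate: $A+B = (A_1+B_1) + (A_2+B_2)\qj$, so the blocks of $\chi_{A+B}$ are $A_1+B_1$, $A_2+B_2$, $-\overline{A_2+B_2} = -\overline{A_2}-\overline{B_2}$, and $\overline{A_1+B_1} = \overline{A_1}+\overline{B_1}$, matching $\chi_A+\chi_B$ block-by-block because complex conjugation is additive. Closure in $\Omega$ follows from the fact that the resulting matrix still has the required partitioned form \eqref{complex adjoint matrix}.

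For the multiplicative identity $\chi_{AB}=\chi_A\chi_B$, the key tool is the commutation rule $\qj z = \overline{z}\qj$ for $z \in \C$, extended entrywise to $\qj M = \overline{M}\qj$ for $M \in \C^{n\times n}$. Expanding
\begin{equation*}
AB = (A_1+A_2\qj)(B_1+B_2\qj) = A_1B_1 + A_1B_2\qj + A_2\qj B_1 + A_2\qj B_2\qj
\end{equation*}
and applying $\qj B_i = \overline{B_i}\qj$ together with $\qj^2=-1$ yields the decomposition $AB = (A_1B_1-A_2\overline{B_2}) + (A_1B_2+A_2\overline{B_1})\qj$. Next I would compute the block product $\chi_A\chi_B$ directly from \eqref{complex adjoint matrix} and check that its four blocks equal $A_1B_1-A_2\overline{B_2}$, $A_1B_2+A_2\overline{B_1}$, $-\overline{A_1B_2+A_2\overline{B_1}}$, and $\overline{A_1B_1-A_2\overline{B_2}}$; the two bottom blocks come out correctly by distributing the conjugation. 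This is the main (but still routine) computational step, and the only subtlety is keeping track of the non-commutativity and conjugations in the right order. Closure in $\Omega$ follows because $\chi_{AB}$ again has the required form.

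Finally, for the inverse, I would apply the multiplicative identity just proved to the equation $A A^{-1} = I$: since $\chi_I = I_{2n}$ trivially, we get $\chi_A\,\chi_{A^{-1}} = \chi_{AA^{-1}} = I_{2n}$ and symmetrically $\chi_{A^{-1}}\chi_A = I_{2n}$, so $\chi_A$ is invertible in $\C^{2n\times 2n}$ with $\chi_A^{-1} = \chi_{A^{-1}} \in \Omega$. The main obstacle in the whole proof is not conceptual but notational: handling the $\qj$-commutation rule carefully so that the cross terms in the multiplication expansion come out with the correct signs and conjugations; once this is done, all three assertions drop out together.
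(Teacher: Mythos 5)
Your proof is correct: the block computation for $\chi_A\chi_B$ checks out (the decomposition $AB=(A_1B_1-A_2\overline{B_2})+(A_1B_2+A_2\overline{B_1})\qj$ via $\qj z=\overline{z}\qj$ is exactly right, and the bottom row of the block product matches $-\overline{C_2}$, $\overline{C_1}$), and deducing the inverse from $\chi_A\chi_{A^{-1}}=\chi_{AA^{-1}}=I_{2n}$ is the standard step. The paper itself gives no proof, citing Zhang's work instead, and your direct verification is precisely the argument found there.
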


\begin{lemma}\label{Jordanform}
	\cite{zhang2001jordan,rodman2014topics}  Let $A \in \H^{n\times n}$. Then there exists a $P\in \H^{n\times n}$ such that
	\begin{equation*}
	\chi_P^{-1}\chi_A\chi_P=\begin{pmatrix}
	J&0\\
	0 & \overline{J}
	\end{pmatrix}
	\end{equation*}
	is a Jordan canonical form of $\chi_A$, where and $J\in\C^{n\times n}$ has all its diagonal entries with nonnegative imaginary parts. Consequently, $P^{-1}A P=J$ is a Jordan canonical form of $A$ in $\C^{n\times n}$.
\end{lemma}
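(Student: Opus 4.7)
The plan is to construct a Jordan basis for the complex adjoint $\chi_A \in \C^{2n\times 2n}$ that respects the block structure of $\Omega$, so that the resulting transition matrix is itself of the form $\chi_P$ for some $P\in\H^{n\times n}$; Lemma \ref{expmultiply} will then extract the desired canonical form of $A$ from the $(1,1)$-block.

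First I would establish a conjugation symmetry of $\chi_A$. Let $K := \begin{pmatrix} 0 & I_n \\ -I_n & 0 \end{pmatrix}$. A direct block multiplication gives $K\chi_A K^{-1} = \overline{\chi_A}$. Consequently the conjugate-linear map $\sigma(v) := K^{-1}\overline{v}$ satisfies $\sigma^2 = -I$ and commutes with $\chi_A$. Since complex conjugation carries an eigenvalue $\lambda$ to $\overline\lambda$, the spectrum of $\chi_A$ (with all generalized multiplicities) is conjugation-invariant; and on the generalized eigenspace attached to any real eigenvalue, $\sigma$ restricts to a quaternionic structure (a conjugate-linear involution with $\sigma^2=-I$ commuting with the restriction of $\chi_A$). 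A standard argument then shows that the Jordan blocks at any real eigenvalue occur in $\sigma$-paired blocks of equal size, so the Jordan pattern of $\chi_A$ partitions into pairs $\bigl(J_k(\widetilde\lambda),\,J_k(\overline{\widetilde\lambda})\bigr)$ with $\mathrm{Im}\,\widetilde\lambda \geq 0$, consistent with Theorem \ref{thm of q matrix}(1).

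Next I would assemble the basis. Pick a Jordan basis $v_1,\ldots,v_n$ for the ``upper-half'' Jordan chains of $\chi_A$, associated with the $n$ standard eigenvalues of $A$ in the closed upper half-plane and producing a matrix $J\in\C^{n\times n}$ in Jordan form; by the pairing established above, the vectors $\sigma(v_1),\ldots,\sigma(v_n)$ form a Jordan basis for the conjugate half with Jordan matrix $\overline{J}$. Writing $v_j=\begin{pmatrix}x_j \\ y_j\end{pmatrix}$ with $x_j,y_j\in\C^n$, one computes $\sigma(v_j)=\begin{pmatrix}-\overline{y_j} \\ \overline{x_j}\end{pmatrix}$, so that setting $P_1:=[x_1|\cdots|x_n]$, $P_2:=[-\overline{y_1}|\cdots|-\overline{y_n}]$ and $P:=P_1+P_2\qj\in\H^{n\times n}$, the transition matrix $T:=[v_1|\cdots|v_n|\sigma(v_1)|\cdots|\sigma(v_n)]$ is precisely $\chi_P$ in the form (\ref{complex adjoint matrix}), and is invertible by construction. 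Hence $\chi_P^{-1}\chi_A\chi_P=\begin{pmatrix}J & 0 \\ 0 & \overline{J}\end{pmatrix}$. Lemma \ref{expmultiply} identifies the left-hand side with $\chi_{P^{-1}AP}$, and by the uniqueness of the decomposition $A=A_1+A_2\qj$ the $(1,1)$-block comparison yields $P^{-1}AP=J$.

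The main obstacle is justifying the $\sigma$-pairing at real eigenvalues: this is the precise step where the quaternionic (rather than merely complex) origin of $\chi_A$ is felt, and it is what forces $J$ to contain exactly $n$ diagonal entries, reproducing the standard-eigenvalue count of Theorem \ref{thm of q matrix}(1). The remainder of the argument is essentially bookkeeping about the block structure of $\Omega$, facilitated by Lemma \ref{expmultiply}.
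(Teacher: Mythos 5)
Your proof is correct. Note that the paper does not actually prove this lemma at all --- it is quoted from \cite{zhang2001jordan,rodman2014topics} without argument --- so you have supplied a genuine proof where the paper offers only a citation. Your argument is the standard one for symplectic/quaternionic symmetry of the complex adjoint: the identity $K\chi_AK^{-1}=\overline{\chi_A}$ with $K=\left(\begin{smallmatrix}0&I\\-I&0\end{smallmatrix}\right)$ checks out by direct block multiplication, $\sigma(v)=K^{-1}\overline v$ is indeed a conjugate-linear map with $\sigma^2=-I$ commuting with $\chi_A$ and intertwining $\chi_A-\lambda I$ with $\chi_A-\overline\lambda I$, and your bookkeeping showing that the transition matrix $[v_1|\cdots|v_n|\sigma(v_1)|\cdots|\sigma(v_n)]$ has exactly the form (\ref{complex adjoint matrix}) with $P_1=[x_1|\cdots|x_n]$, $P_2=[-\overline{y_1}|\cdots|-\overline{y_n}]$ is accurate. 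The one step you leave as ``a standard argument'' --- the pairing of equal-size Jordan blocks at real eigenvalues --- deserves one more line to be self-contained: since $\sigma$ commutes with $N=\chi_A-\lambda I$ for real $\lambda$, each kernel $\ker N^j$ is $\sigma$-invariant and hence carries a quaternionic structure, so its complex dimension is even; therefore the number of Jordan blocks of size at least $j$, namely $\dim\ker N^j-\dim\ker N^{j-1}$, is even for every $j$, and a $\sigma$-adapted quaternionic Jordan basis of the generalized eigenspace yields the required pairs $(w_m)$, $(\sigma(w_m))$. With that sentence added, the proof is complete and matches what the cited references do.
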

\begin{remark}
	The  diagonal entries of $J$ are actually the standard eigenvalues of $A$.
\end{remark}

If $\lambda$ is a standard eigenvalue of $A \in \H^{n\times n}$, its algebraic multiplicity is defined by the number of its occurrences in the Jordan canonical form $J$.  Since the totality of solutions  for
$A\bmeta=\bmeta \lambda$ is not a $\H$-module. Thus we could not use dimensionality of 'eigenspace' to define the geometric multiplicity for $\lambda$. Note that $\lambda$ is a eigenvalue of $\chi_A$ and motivated by Lemma \ref{Jordanform}, we may define the  geometric multiplicity for the standard eigenvalues of quaternion matrices as follows.
\begin{definition}
	Let $\lambda$ be a standard eigenvalue of $A \in \H^{n\times n}$, the geometric multiplicity for $\lambda$ is defined as the  dimensionality of the  (complex) linear space   $\{\bx\in \C^n : (J-\lambda I)\bx=0\}$, where $J$ is the Jordan canonical form of $A$ in $\C^{n\times n}$.
\end{definition}

Employing above lemmas, it is not difficult to verify that  $\Omega$ is also closed under exponential.
\begin{lemma}\label{exp q-matrix eqn}
	Let $A,C\in \H^{n\times n}$, where $C$ is invertible. Then  $e^{\chi_A}=\chi_{e^A}\in \Omega$ and there is a $B \in \H^{n\times n}$ such that $e^B=C$.
\end{lemma}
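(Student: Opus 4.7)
The plan is to establish the two assertions in turn, using the fact that $\chi$ is essentially a continuous ring homomorphism on $\H^{n\times n}$.

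For the first statement $e^{\chi_A}=\chi_{e^A}\in\Omega$, I would apply Lemma \ref{expmultiply} to finite partial sums. By additivity and multiplicativity of $\chi$, for every $N$ one has $\sum_{k=0}^N \frac{\chi_A^k}{k!}=\chi_{\sum_{k=0}^N A^k/k!}$. Since $\chi$ merely permutes and conjugates entries, it is continuous, so letting $N\to\infty$ gives $e^{\chi_A}=\chi_{e^A}$. The membership in $\Omega$ is then immediate from the definition of $\Omega$ applied to the quaternion matrix $e^A$.

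For the second statement, the existence of a quaternionic logarithm $B$ with $e^B=C$, I would reduce to the classical complex case via the Jordan decomposition in Lemma \ref{Jordanform}. Invertibility of $C$ forces $\chi_C$ to be invertible by Theorem \ref{thm of q matrix}(3), so the associated Jordan block $J\in\C^{n\times n}$ in
\[
\chi_P^{-1}\chi_C\chi_P=\begin{pmatrix} J & 0 \\ 0 & \overline{J} \end{pmatrix}
\]
is invertible, i.e.\ none of its diagonal entries vanish. Invoking the classical result that every invertible complex matrix has a complex logarithm, I obtain $K\in\C^{n\times n}$ with $e^K=J$, and conjugating gives $e^{\overline{K}}=\overline{J}$. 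Viewing $K$ as a quaternion matrix, the block matrix $M:=\begin{pmatrix} K & 0 \\ 0 & \overline{K}\end{pmatrix}$ has precisely the shape (\ref{complex adjoint matrix}) and equals $\chi_K\in\Omega$.

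I would then form $\chi_B := \chi_P\,\chi_K\,\chi_P^{-1}$, which lies in $\Omega$ by the closure under multiplication and inversion from Lemma \ref{expmultiply}, and therefore defines a genuine $B\in\H^{n\times n}$. Applying the first part,
\[
\chi_{e^B}=e^{\chi_B}=\chi_P\,e^{\chi_K}\,\chi_P^{-1}=\chi_P\begin{pmatrix} J & 0 \\ 0 & \overline{J}\end{pmatrix}\chi_P^{-1}=\chi_C,
\]
and injectivity of $\chi$ yields $e^B=C$. The main obstacle is conceptual rather than technical: one must notice that the special block-diagonal shape of Lemma \ref{Jordanform} lets the standard complex logarithm be embedded as a member of $\Omega$, so that, after undoing the similarity, it descends to a quaternion matrix. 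Once this is seen, all remaining steps are routine applications of the homomorphism properties of $\chi$.
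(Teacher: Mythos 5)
Your proof is correct, and it splits interestingly against the paper's. For the identity $e^{\chi_A}=\chi_{e^A}$ you take a genuinely different and more elementary route: since $\chi$ is additive, real-homogeneous and multiplicative (Lemma \ref{expmultiply}), it commutes with the partial sums of the exponential series, and continuity of the entry-rearranging map $\chi$ passes this to the limit. The paper instead conjugates $A$ to its complex Jordan form $J$ via Lemma \ref{Jordanform} and reads off $e^{\chi_A}$ from $e^{\mathrm{diag}(J,\overline J)}=\mathrm{diag}(e^J,e^{\overline J})=\chi_{e^J}$. Your argument buys independence from the Jordan-form machinery (it would work verbatim for any continuous matrix function given by a real power series), while the paper's sets up the conjugation it reuses immediately afterwards. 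For the logarithm of $C$ the two arguments are essentially the same: both reduce to the complex Jordan form of $C$, invoke the classical existence of a complex logarithm of an invertible complex matrix, and undo the similarity; the paper phrases this directly as $B=SDS^{-1}$ with $S^{-1}CS=K=e^D$, whereas you carry the computation out at the level of the adjoint matrices in $\Omega$ and then descend by injectivity of $\chi$. Both are valid; your version makes explicit the small point (closure of $\Omega$ under multiplication and inversion) that guarantees the conjugated logarithm really is the adjoint of a quaternion matrix.
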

\begin{proof}
	By Lemma \ref{Jordanform}, there is a $P\in \H^{n\times n}$ such that $P^{-1}A P=J \in\C^{n\times n}$. Observe that $\exp({\overline{J}})=\overline{\exp(J)}$ and therefore
	\begin{equation*}
	\begin{split}
	\chi_P^{-1}e^{\chi_A}\chi_P & =e^{\chi_P^{-1}\chi_A\chi_P}\\
	& =e^{\begin{pmatrix}
		J&0\\
		0 & \overline{J}
		\end{pmatrix}}=\begin{pmatrix}
	e^J&0\\
	0 & e^{\overline{J}}
	\end{pmatrix}=\chi_{e^J}.
	\end{split}
	\end{equation*}
	Hence $e^{\chi_A}=\chi_P\chi_{e^J}\chi_P^{-1}=\chi_{Pe^JP^{-1}}=\chi_{e^{PJP^{-1}}}=\chi_{e^A}$.
	
	For quaternion matrix $C$, there is a $S \in \H^{n\times n}$ such that $S^{-1}CS=K\in \C^{n\times n}$. Since $C$ is  invertible, then $K$ is nonsingular. Moreover, there exists a complex matrix $D$ such that $K=e^D$ by Theorem 2.82 in \cite{chicone2006ordinary}. Therefore
	\begin{equation*}
	\begin{split}
	\chi_C & =\chi_S\chi_K\chi_S^{-1} \\
	& =\chi_S\chi_{e^D}\chi_S^{-1}=\chi_S e^{\chi_D}\chi_S^{-1}=e^{\chi_S \chi_D \chi_S^{-1}}=e^{\chi_{SDS^{-1}}}=\chi_{e^{SDS^{-1}}}.
	\end{split}
	\end{equation*}
	Thus
	$C=e^{SDS^{-1}}$. Set $B=SDS^{-1}$, we complete the proof.
\end{proof}

By Lemma \ref{exp q-matrix eqn} and Theorem \ref{thm of q matrix}, we obtain the following spectral mapping theorem.

\begin{theorem}\label{spectral exponetial map}
	If $A\in\H^{n\times n}$ and $\lambda_1,\lambda_2,\cdots,\lambda_n$ are the standard eigenvalues of $A$ repeated according to their multiplicity, then $e^{\widetilde{\lambda}_1}, e^{\widetilde{\lambda}_2}, \cdots, e^{\widetilde{\lambda}_n}$ are the standard eigenvalues of $e^A$, where $\widetilde{\lambda}_j$ ($j=1,2,\cdots,n$) is defined by
	\begin{equation*}
	\widetilde{\lambda}_j:=
	\begin{cases}
	\lambda_j,& \mathrm{if}~ e^{\lambda_j} ~\mathrm{has~ nonnegative ~imaginary~ part};\\
	\overline{\lambda_j}, &\mathrm{otherwise}.
	\end{cases}
	\end{equation*}
\end{theorem}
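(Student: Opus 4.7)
The plan is to reduce to the Jordan form of $A$ and then read off the eigenvalues from the resulting triangular complex matrix. By Lemma \ref{Jordanform} there exists $P\in\H^{n\times n}$ such that $P^{-1}AP = J\in\C^{n\times n}$ is an upper triangular Jordan matrix whose diagonal entries, by the remark following the lemma, are exactly the standard eigenvalues $\lambda_1,\ldots,\lambda_n$ of $A$ (so in particular each has nonnegative imaginary part). Applying Lemma \ref{exp q-matrix eqn} and the identity $e^{PJP^{-1}}=Pe^{J}P^{-1}$, we obtain $e^A = P\,e^{J}\,P^{-1}$, which exhibits $e^A$ as similar (in the quaternion sense) to the complex matrix $e^{J}$.

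Next I would observe that $e^{J}$ is itself upper triangular with diagonal entries $e^{\lambda_1},\ldots,e^{\lambda_n}$. By part (4) of Theorem \ref{thm of q matrix}, these are the eigenvalues of $e^{J}$ viewed as a complex matrix; however they need not lie in the closed upper half plane, so part (2) of the same theorem must be invoked to pass from complex eigenvalues to standard eigenvalues. Writing $e^{\lambda_j}=\alpha_j+\qi\beta_j$ with $\alpha_j,\beta_j\in\mathbb{R}$, part (2) yields that the standard eigenvalues of $e^{J}$ are $\alpha_j+\qi|\beta_j|$. A short case analysis then identifies these with $e^{\widetilde\lambda_j}$: if $\beta_j\geq 0$, then $\alpha_j+\qi|\beta_j|=e^{\lambda_j}=e^{\widetilde\lambda_j}$ with $\widetilde\lambda_j=\lambda_j$; if $\beta_j<0$, then $\alpha_j+\qi|\beta_j|=\overline{e^{\lambda_j}}=e^{\overline{\lambda_j}}=e^{\widetilde\lambda_j}$ with $\widetilde\lambda_j=\overline{\lambda_j}$. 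In either case $e^{\widetilde\lambda_j}$ has nonnegative imaginary part, consistent with the standard normalization.

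Finally, I would transfer the conclusion from $e^{J}$ to $e^A$ using the fact that similarity of quaternion matrices preserves the Jordan canonical form (and hence the standard eigenvalues), an immediate consequence of Lemma \ref{Jordanform}. This completes the argument.

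I do not anticipate a serious obstacle. The only delicate point is bookkeeping: one must be careful that the definition of $\widetilde\lambda_j$ is tailored precisely so that $e^{\widetilde\lambda_j}$ lies in the closed upper half plane, so that the correspondence with the standard eigenvalues produced by Theorem \ref{thm of q matrix}(2) is exact rather than merely up to conjugation.
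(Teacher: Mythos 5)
Your argument is correct, and it takes a recognizably different route from the paper's. The paper passes to the complex adjoint matrix $\chi_A$, whose spectrum is $\{\lambda_1,\dots,\lambda_n,\overline{\lambda_1},\dots,\overline{\lambda_n}\}$, applies the classical spectral mapping theorem to get the spectrum of $e^{\chi_A}$, and then uses the identity $e^{\chi_A}=\chi_{e^A}$ from Lemma \ref{exp q-matrix eqn} to read off the standard eigenvalues of $e^A$ as the elements of that spectrum with nonnegative imaginary part. You instead stay at the level of $\H^{n\times n}$: conjugate $A$ to its complex Jordan form $J$ via Lemma \ref{Jordanform}, note that $e^A=Pe^JP^{-1}$ is similar to the triangular complex matrix $e^J$ with diagonal $e^{\lambda_1},\dots,e^{\lambda_n}$, and then convert these complex eigenvalues to standard ones via Theorem \ref{thm of q matrix}(2). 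Both proofs rest on the same two pillars (reduction to a complex matrix, then the nonnegative-imaginary-part normalization), but your version makes the multiplicity bookkeeping cleaner: you obtain exactly $n$ diagonal entries, whereas the paper's $2n$-element spectrum $\sigma$ requires a little care when some $\lambda_j$ is real, since then $e^{\lambda_j}$ and $e^{\overline{\lambda_j}}$ coincide and both lie in the closed upper half plane, so one must avoid double counting. Two minor tidying remarks: the identity $e^{PJP^{-1}}=Pe^JP^{-1}$ follows directly from the power series definition and does not require Lemma \ref{exp q-matrix eqn}; and the eigenvalues of the complex triangular matrix $e^J$ are its diagonal entries by ordinary complex linear algebra, so the appeal to Theorem \ref{thm of q matrix}(4) there is superfluous.
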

\begin{proof}
	If   $\lambda_1,\lambda_2,\cdots,\lambda_n$ are the standard eigenvalues of $A$, then
	$\lambda_1,\lambda_2,\cdots,\lambda_n,\overline{\lambda_1},\overline{\lambda_2},\cdots,\overline{\lambda_n}$ are the eigenvalues of $\chi_A$. By  spectral mapping theorem of complex-valued matrix, we conclude that $\sigma=\{e^{\lambda_1},e^{\lambda_2},\cdots,e^{\lambda_n},e^{\overline{\lambda_1}},e^{\overline{\lambda_2}},\cdots,e^{\overline{\lambda_n}}\}$ is the spectrum of $e^{\chi_A}$. Note that $e^{\chi_A}=\chi_{e^A}$ by Lemma
	\ref{exp q-matrix eqn}, we know that
	$\sigma$ is the spectrum of $\chi_{e^A}$. From Theorem \ref{thm of q matrix}, all elements of $\sigma$ are complex-valued eigenvalues of  $e^A$; in particular, the complex numbers possessing the nonnegative imaginary parts in $\sigma$  are   the standard eigenvalues of $e^A$.
\end{proof}


\section{Stability of linear   homogeneous  QDEs with constant coefficients}\label{S3}

Analogous to ODEs, we can define the concept of stability (in Lyapunov sense) for QDEs.
\begin{definition}
	Let $\bfunc: [t_0,\infty)\times\H ^n\to \H ^n$. Consider $\dot{\bx}=f(t,\bx), ~t\in [t_0,\infty) $. The solution $\bphi(t,t_0,\bx_0)$ (satisfying initial condition $\bx(t_0)=\bx_0$) is called stable if for any $\epsilon>0$, there is a $\delta>0$ such that $\norm{\bx-\bx_0}<\delta$ implies $\norm{\bphi(t,t_0,\bx)-\bphi(t,t_0,\bx_0)}<\varepsilon$ for all $t\geq t_0$. The solution $\bphi(t,t_0,\bx_0)$   is called asymptotically stable if there is a $\delta>0$ such that  $\lim_{t\to \infty}\norm{\bphi(t,t_0,\bx)-\bphi(t,t_0,\bx_0)}=0$ whenever $\norm{\bx-\bx_0}<\delta$.
\end{definition}

For any $A=(a_{ij})_{n\times n}\in\H^{n\times n}$ and $\bmeta=(\eta_1,\eta_2,\cdots,\eta_n)^T\in \H^n$, the norm of $A$ and $\bmeta$ are respectively  defined by
\begin{equation*}
\norm{A}=\sum_{i,j=1}^n | a_{ij}|,~~~\norm{\bmeta}=\sum_{k=1}^n |\eta_k|.
\end{equation*}
The norm $\norm{\cdot}$ defined for $A$ is a matrix norm. It is easy to verify that for any $A,B\in \H^{n\times n}$, the submultiplicativity  holds, that is
\begin{equation*}
\norm{AB}\leq \norm{A} \norm{B}.
\end{equation*}

By similar arguments to Theorem 1.1 in \cite{afanasiev2013mathematical}, we see that the stability of zero solution of (\ref{homo linear systems}) implies the stability of any other solutions. Thus it is permissible to simply say that system  (\ref{homo linear systems}) is stable (or unstable).

\begin{theorem}\label{judging theorem}
	Let $M(t)$ be a fundamental matrix of (\ref{homo linear systems}). Then the system
	(\ref{homo linear systems}) is stable if and only if  $\norm{M(t)}$ is bounded . The system (\ref{homo linear systems}) is asymptotically stable if and only if $\lim_{t\to \infty}\norm{M(t)}=0$.
\end{theorem}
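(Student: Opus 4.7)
The plan is to reduce everything to the representation of solutions via the fundamental matrix and then exploit the submultiplicativity of $\norm{\cdot}$ established just above the statement. Since $\H^n$ is a right $\H$-module and $M(t)$ is a fundamental matrix of (\ref{homo linear systems}), every solution has the form $M(t)c$ for a constant column $c\in\H^n$; imposing the initial condition at $t_0$ gives
\begin{equation*}
\bphi(t,t_0,\bx_0)=M(t)M(t_0)^{-1}\bx_0,
\end{equation*}
so by linearity $\bphi(t,t_0,\bx)-\bphi(t,t_0,\bx_0)=M(t)M(t_0)^{-1}(\bx-\bx_0)$. Combined with the remark preceding the statement (stability of any solution is equivalent to stability of the zero solution), this lets us translate both notions of stability into growth conditions on the operator $t\mapsto M(t)M(t_0)^{-1}$, which is bounded (respectively, vanishes at infinity) precisely when $\norm{M(t)}$ is.

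For the sufficiency half I would argue as follows. If $\norm{M(t)}\leq K$ for all $t\geq t_0$, then submultiplicativity gives
\begin{equation*}
\norm{\bphi(t,t_0,\bx)-\bphi(t,t_0,\bx_0)}\leq \norm{M(t)}\,\norm{M(t_0)^{-1}}\,\norm{\bx-\bx_0}\leq K\,\norm{M(t_0)^{-1}}\,\norm{\bx-\bx_0},
\end{equation*}
so choosing $\delta=\varepsilon/(K\norm{M(t_0)^{-1}})$ yields stability. Under the stronger hypothesis $\norm{M(t)}\to 0$, the same inequality forces $\norm{\bphi(t,t_0,\bx)-\bphi(t,t_0,\bx_0)}\to 0$ for every $\bx$, giving asymptotic stability.

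For the necessity half, I would test the hypothesis against the standard basis vectors $\bme_1,\ldots,\bme_n\in\H^n$. If the system is stable, pick $\varepsilon=1$ and the corresponding $\delta>0$; taking $\bx=\bx_0+\tfrac{\delta}{2}\bme_j$ yields $\norm{M(t)M(t_0)^{-1}\bme_j}\leq 2/\delta$ uniformly in $t\geq t_0$. Since $M(t)M(t_0)^{-1}\bme_j$ is the $j$th column of $M(t)M(t_0)^{-1}$, summing over $j$ recovers the norm of the whole matrix and gives $\norm{M(t)M(t_0)^{-1}}\leq 2n/\delta$, whence $\norm{M(t)}\leq (2n/\delta)\norm{M(t_0)}$ is bounded. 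The asymptotic case is parallel: stability supplies the uniform bound needed to apply the column decomposition, and the hypothesis $\lim_{t\to\infty}\norm{\bphi(t,t_0,\bx)}=0$ at $\bx=\tfrac{\delta}{2}\bme_j$ passes to the matrix norm the same way.

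The only place I expect any delicacy is in the reverse direction, where one must be attentive to the right-module convention: the initial data must be attached on the right of $M(t)M(t_0)^{-1}$, and the column-by-column extraction of $\norm{M(t)M(t_0)^{-1}}$ relies on the specific definition $\norm{A}=\sum_{i,j}\abs{a_{ij}}$ rather than an arbitrary matrix norm. Once this bookkeeping is in place, the non-commutativity of $\H$ plays no further role and the argument is a faithful transcription of the classical ODE proof.
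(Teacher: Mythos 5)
Your argument is correct, and its overall architecture is the paper's: write every solution as $M(t)M^{-1}(t_0)\bxi$, get sufficiency from submultiplicativity of $\norm{\cdot}$, and get necessity by testing the stability estimate against suitable initial vectors. The one place you genuinely depart from the paper is the necessity step, and your version is the better one. The paper tests against the single vector $(\tfrac{1}{n},\ldots,\tfrac{1}{n})^T$ and asserts the identity $\norm{C}=n\,\norm{C(\tfrac{1}{n},\ldots,\tfrac{1}{n})^T}$ for $C=M(t)M^{-1}(t_0)$; with the entrywise norm $\norm{C}=\sum_{i,j}\abs{c_{ij}}$ this is in fact only the inequality $\norm{C}\geq n\,\norm{C(\tfrac{1}{n},\ldots,\tfrac{1}{n})^T}$, since cancellation inside each row sum $\sum_j c_{ij}$ can make the right-hand side strictly smaller, so the paper's chain of estimates points the wrong way at its first link. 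Your choice of the scaled basis vectors $\tfrac{\delta}{2}\bme_j$ avoids this entirely: $C\bme_j$ is exactly the $j$-th column, $\norm{C}=\sum_j\norm{C\bme_j}$ holds as a genuine identity for this norm, and summing the bounds $\norm{C\bme_j}\leq 2/\delta$ gives $\norm{C}\leq 2n/\delta$ and hence boundedness of $\norm{M(t)}$ after one more application of submultiplicativity. The asymptotic case is handled the same way in both treatments, and your bookkeeping about right scalar multiplication is sound (the scalars $\delta/2$ are real, so they commute through $C$). In short: same strategy, but your column-by-column decomposition repairs a step that the paper's proof actually gets wrong.
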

\begin{proof}
	Let $L$ be an upper bound for $\norm{M(t)}$,  $L_1=\norm{M^{-1}(t_0)}$  and $\bphi(t,t_0,\bxi)$ be the solution of (\ref{homo linear systems}) with $\bphi(t_0,t_0,\bxi)=\bxi=(\xi_1,\xi_2,\cdots,\xi_n)^T$. Then  $\bphi(t,t_0,\bxi)=M(t)M^{-1}(t_0) \bxi$.
	For any $\epsilon>0$, let $\delta=\frac{\epsilon}{L L_1}$, then
	$\norm{\bphi(t,t_0,\bxi)-0}=\norm{M(t)M^{-1}(t_0) \bxi}\leq L L_1 \norm{\bxi}<\epsilon$ whenever $\norm{\bxi}<\delta$.
	If for any $\epsilon > 0$ there is a $\delta>0$ such that
	$\norm{M(t)M^{-1}(t_0) \bxi }<\epsilon$ for $\norm{\bxi} <\delta$. Then
	\begin{equation*}
	\begin{split}
	\norm{M(t)M^{-1}(t_0)} &  =n \norm{M(t)M^{-1}(t_0)(\frac{1}{n}, \frac{1}{n}, \cdots, \frac{1}{n})^T} \\
	&\leq n  \sup_{\norm{\bmeta}\leq  1} \norm{M(t)M^{-1}(t_0)\bmeta}\\
	&= n \sup_{\norm{\bxi}\leq  \delta} \norm{M(t)M^{-1}(t_0) \delta^{-1} \bxi}\\
	&< n \epsilon \delta^{-1}
	\end{split}
	\end{equation*}
	Therefore $\norm{M(t)}< n \epsilon \delta^{-1} L_1^{-1}$ is bounded.
	
	If $\lim_{t\to \infty}\norm{M(t)}=0$. Then $\norm{\bphi(t,t_0,\bxi)-0}=\norm{M(t)M^{-1}(t_0) \bxi}=\norm{M(t)} L_1 \norm{\bxi}$ tends to $0$ as $t\to \infty$ whenever $\norm{\bxi}<\delta$.  Conversely, it is easy to see that if the zero solution is   asymptotically stable, then $\norm{M(t)}$ has to be convergent to $0$ as $t\to \infty$.
\end{proof}


By using the Jordan canonical form of $A \in \H^{n\times n}$, we can obtain a matrix representation for $e^{tA}$. Let $P$ be a quaternion matrix such that $P^{-1}AP=J \in C^{n\times n}$, then
$P^{-1}e^{tA} P = e^{tP^{-1}AP}=e^{tJ}$. Let  $\lambda_1, \lambda_2, \cdots, \lambda_k$ be the distinct standard eigenvalues of $A$   that correspond to multiplicities $n_1, n_2, \cdots, n_k$, respectively. Then $J=\mathrm{diag}(J_1, J_2, \cdots, J_k)$ where $J_i =\lambda_i I + N_i$ with $N_i^{n_i}=0$. Thus we have that
\begin{equation*}
e^{tJ_i}=e^{t(\lambda_i I+N_i)}=e^{t\lambda_i}e^{tN_i}=e^{t\lambda_1}\left(I+t N_i+\frac{t^2}{2!}N_i^2+ \cdots + \frac{t^{n_i-1}}{(n_i-1)!} N_i^{n_i-1}\right).
\end{equation*}
Note that $e^{tJ}=\mathrm{diag}(e^{t J_1}, e^{t J_2}, \cdots, e^{t J_k})$, then we obtain an explicit matrix representation for $e^{tA}=Pe^{tJ}P^{-1}$. Moreover, this representation has a similar form with the cases where  $A$ is a real or complex matrix. Hence by   similar arguments to Theorem 4.2 in \cite{hale2009ordinary}, we have the following theorem.
\begin{theorem}\label{stablility of constant system}
	The system $ \dot{\bx}=A\bx$
	\begin{enumerate}
		\item is stable if and only if the standard eigenvalues of $A$ all have non-positive real parts and the algebraic multiplicity equals the geometric multiplicity of each standard eigenvalue with   zero real part;
		\item  is asymptotically stable if and only if all the standard eigenvalues of $A$ have negative real  parts.
	\end{enumerate}
\end{theorem}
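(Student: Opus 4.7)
The plan is to reduce the problem to the classical complex case via the Jordan canonical form already established in Lemma \ref{Jordanform}, then invoke the characterization of stability through the fundamental matrix from Theorem \ref{judging theorem}. Specifically, since $e^{tA}$ is a fundamental matrix for the system $\dot{\bx}=A\bx$, it suffices to determine precisely when $\norm{e^{tA}}$ is bounded on $[0,\infty)$ and when $\norm{e^{tA}}\to 0$ as $t\to\infty$.

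First, I would use Lemma \ref{Jordanform} to write $P^{-1}AP = J$ with $J \in \C^{n\times n}$ in Jordan form, so that $e^{tA} = P e^{tJ} P^{-1}$. By the submultiplicativity of the norm, the quantities $\norm{e^{tA}}$ and $\norm{e^{tJ}}$ are bounded above and below by each other up to constants $\norm{P}\,\norm{P^{-1}}$, hence boundedness of $\norm{e^{tA}}$ (respectively, convergence to zero) is equivalent to the same property for $\norm{e^{tJ}}$. The explicit block formula for $e^{tJ_i}$ displayed immediately before the theorem shows that each entry of $e^{tJ}$ is a sum of terms of the form $t^{\ell}e^{t\lambda_i}/\ell!$ with $0\le\ell\le n_i-1$, where $\lambda_1,\dots,\lambda_k$ are the distinct standard eigenvalues with algebraic multiplicities $n_1,\dots,n_k$.

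Next I would read off the two stability conditions from this explicit representation. For asymptotic stability, $\abs{t^{\ell}e^{t\lambda_i}}=t^\ell e^{t\,\Re(\lambda_i)}\to 0$ for every $\ell\ge 0$ precisely when $\Re(\lambda_i)<0$; conversely, if some $\Re(\lambda_i)\ge 0$, then the diagonal entry $e^{t\lambda_i}$ alone prevents $\norm{e^{tJ}}$ from tending to zero. This gives part (2). For mere stability, boundedness of $\norm{e^{tJ_i}}$ requires $\Re(\lambda_i)\le 0$, and when $\Re(\lambda_i)=0$ the polynomial factor $t^\ell$ is unbounded unless $\ell=0$, i.e., unless $N_i=0$ in the block $J_i=\lambda_i I+N_i$. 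This vanishing of the nilpotent part is equivalent to saying that all Jordan blocks attached to $\lambda_i$ have size $1$, which by the definition of geometric multiplicity given just before Lemma \ref{exp q-matrix eqn} is exactly the condition that the algebraic and geometric multiplicities of $\lambda_i$ coincide. Conversely, if these conditions fail for some $\lambda_i$, one exhibits an explicit entry of $e^{tJ}$ that is unbounded.

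I do not anticipate a genuine obstacle here: once one accepts that the standard eigenvalues in the quaternionic setting play the role of the complex eigenvalues (via Lemma \ref{Jordanform} and the definition of geometric multiplicity tied to the complex Jordan form $J$), the argument is essentially identical to the classical proof of Theorem 4.2 in \cite{hale2009ordinary}. The only mildly delicate point is justifying that the geometric multiplicity, as defined via $\dim\{\bx\in\C^n:(J-\lambda I)\bx=0\}$, indeed corresponds to the number of Jordan blocks associated with $\lambda$ in $J$; this is immediate from standard linear algebra applied to the complex Jordan form. The rest is bookkeeping with the explicit representation of $e^{tJ_i}$.
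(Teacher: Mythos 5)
Your proposal is correct and follows essentially the same route as the paper: the paper likewise reduces to the complex Jordan form $J=P^{-1}AP$ from Lemma \ref{Jordanform}, writes out the explicit block representation of $e^{tJ_i}=e^{t\lambda_i}e^{tN_i}$, and then appeals to the classical argument of Theorem 4.2 in \cite{hale2009ordinary}, which is exactly the case analysis you spell out. The only difference is that you make explicit the details the paper leaves to the citation (the comparison of $\norm{e^{tA}}$ with $\norm{e^{tJ}}$ via submultiplicativity, and the identification of the geometric multiplicity with the number of Jordan blocks), all of which is sound.
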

\begin{remark}
	Since any two similar quaternions possess the same scalar part, thus the phrase "standard" in Theorem \ref{stablility of constant system} can be removed.
\end{remark}
\begin{example}\label{ex1}
	Consider the system $\dot{\bx}=A\bx$, where
	\begin{equation*}
	A=\begin{pmatrix}
	\qi&\qj&\qj\\
	\qk&1&\qk\\
	0&0&1
	\end{pmatrix}
	\end{equation*}
	The principal fundamental matrix at  $t=0$ ($M(0)=I$) is given by
	\begin{equation*}\label{fundamental matrix 1}
	M(t) = \begin{pmatrix}
	\frac{1-\qi}{2}+ \frac{1+\qi}{2}\gamma_1& \frac{\qk-\qj}{2}+\gamma_2&\qj \gamma_3+\gamma_4-e^t\\
	\frac{\qj-\qk}{2}+\frac{\qk-\qj}{2}\gamma_1 &\frac{1-\qi}{2}-\qj\gamma_2&\qi \gamma_3-\qj\gamma_4-(1-\qj-\qk)e^t\\
	0&0&e^{ t}
	\end{pmatrix},
	\end{equation*}
	where $\gamma_1=e^{(1+\qi)t}$, $\gamma_2= \frac{\qj-\qk}{2}e^{(1-\qi)t}$, $\gamma_3=\frac{\qk-1-\qi-\qj}{2} $,  $\gamma_4=e^{(1+\qi)t}\frac{1-\qi+\qj-\qk}{2}$. By straightforward computations,  we have the   result shown in Table \ref{table-ex-1}.
	\begin{table}[ht]
		\centering
		\tabcolsep7.5pt
		\caption{Description of Example \ref{ex1}}{
			\begin{tabular}{ |@{}c | c | c@{}|}
				\hline
				Fundamental &  The standard  & \multirow{2}*{\centering Stability~}\\
				matrix  & eigenvalues of $A$   &    \\ \hline
				\multirow{3}*{\centering $~\displaystyle\lim_{t\to \infty}\norm{M(t)}=\infty$ } &  $\lambda_1=0$, $\Re(\lambda_1)=0$; & \multirow{3}*{\centering unstable~} \\
				&  $\lambda_2=1$, $\Re(\lambda_2)>0$; &   \\
				&  $\lambda_3=1+\qi$, $\Re(\lambda_3)>0$ &   \\ \hline
		\end{tabular}}
		\label{table-ex-1}
	\end{table}
\end{example}

\begin{example}\label{ex2}
	Consider the system $\dot{\bx}=A\bx$, where
	\begin{equation*}
	A= \begin{pmatrix}
	\qi&1&0\\
	0&\qj&0\\
	0&1&\qk
	\end{pmatrix}
	\end{equation*}
	The principal fundamental matrix at $t=0$ is given by
	\begin{equation*}\label{fundamental matrix 2}
	M(t) = \begin{pmatrix}
	e^{\qi t}& \frac{t}{2} \left(e^{\qi t}-\qk e^{-\qi t} \right)+\frac{1+\qk}{2}\sin t&0\\
	0&e^{\qj t}&0\\
	0& \frac{t}{2} \left(e^{\qj t}+\qi e^{\qj t} \right)+\frac{1-\qi}{2}\sin t&e^{\qk t}
	\end{pmatrix}.
	\end{equation*}
	By straightforward computations,  we have the   result shown in Table \ref{table-ex-2}.
		\begin{table}[ht]
		\centering
		\caption{Description of Example \ref{ex2}}{
			\begin{tabular}{ |@{} c | c | c@{}| }
				\hline
				~Fundamental &  The standard  & \multirow{2}*{\centering Stability~}\\
				matrix  & eigenvalues of $A$   &    \\ \hline
				$\displaystyle \norm{M(t)}$ is   &  $\lambda_1=\lambda_2=\lambda_3=\qi$; & \multirow{2}*{\centering unstable~} \\
				unbounded    &   $\Re(\lambda_1)=0$; &   \\
				\hline
		\end{tabular}}
		\label{table-ex-2}
	\end{table}
	 Notice that the the standard eigenvalue $\lambda=\qi$ has zero real part,  we need to show  its algebraic multiplicity is less than its  algebraic multiplicity  $3$. By some basic calculations, we find a quaternion matrix
		\begin{equation*}
	P= \begin{pmatrix}
	-1+\qi&-2 \qi&-\qk\\
	0 & 0 &-2\qi-2\qj\\
	0&1-\qi-\qj-\qk&-1+\qi
	\end{pmatrix}
	\end{equation*}
	such that  
	\begin{equation*}
P^{-1}AP= \begin{pmatrix}
\qi&0&0\\
0&\qi & 1\\
0&0& \qi
\end{pmatrix}.
\end{equation*}
This implies that the algebraic multiplicity of $\lambda=\qi$ is $2$.
\end{example}

\begin{example}\label{ex3}
	Consider the system $\dot{\bx}=A\bx$, where
	\begin{equation*}
	A= \begin{pmatrix}
	-1+2\qj-\qk&-1+2\qi+\qj\\
	-i+\qj+2\qk&-2-\qi+\qk
	\end{pmatrix}
	\end{equation*}
	The principal fundamental matrix $ M(t) $ at $t=0$ is given by
	\begin{equation*}\label{fundamental matrix 3}
	\begin{pmatrix}
	\frac{3+\qi+\qj-\qk}{6}+\frac{2-\qj-\qk}{6}\gamma_1+\frac{1-\qi+2\qk}{6}\gamma_2&  \frac{-1+3\qi-\qj-\qk}{6}+\frac{2\qi+\qj-\qk}{6}\gamma_1+\frac{1-\qi+2\qk}{6}\gamma_2 \\
	\frac{-1-3\qi+\qj+\qk}{6}+\frac{2\qi+\qj-\qk}{6}\gamma_1+\frac{1+\qi-2\qj}{6}\gamma_2&  \frac{3- \qi-\qj-\qk}{6}+\frac{2 +\qj+\qk}{6}\gamma_1+\frac{1+\qi-2\qj}{6}\gamma_2
	\end{pmatrix}
	\end{equation*}
	where $\gamma_1=e^{-(3+3\qi)t}$ and $\gamma_2=e^{(3\qi-3)t}$.
	By straightforward computations, we have the   result shown in Table \ref{table-ex-3}.
	\begin{table}[ht]
		\centering
		\caption{Description of Example \ref{ex3}}{
			\begin{tabular}{| @{}c | c | c@{} |}
				\hline
				~Fundamental &  The standard  & \multirow{2}*{\centering Stability}\\
				matrix  & eigenvalues of $A$   &    \\ \hline
				$\displaystyle \norm{M(t)}$   &  $\lambda_1=0$, $\Re(\lambda_1)=0$; & stable but not   \\
				is bounded      &  $ \lambda_2 =-3+3\qi$, $\Re(\lambda_1)<0$ &  asymptotically~ \\  \hline
		\end{tabular}}
		\label{table-ex-3}
	\end{table}
\end{example}

\begin{example}\label{ex4}
	Consider the system $\dot{\bx}=A\bx$, where
	\begin{equation*}
	A= \begin{pmatrix}
	-1+\qi-\qk&-\qi\\
	1+\qi-\qj+\qk&-2-\qk
	\end{pmatrix}
	\end{equation*}
	The principal fundamental matrix $ M(t) $  at $t=0$ is given by
	\begin{equation*}\label{fundamental matrix 4}
	\begin{pmatrix}
	(\frac{1-\qi}{2}e^{\qi t}+\frac{\qk-\qj}{2}e^{-\qi t})e^{-2t} + \frac{1+\qi+\qj-\qk}{2}e^{-t}& \frac{1-\qi}{2}e^{(\qi-2)t}+\frac{1-\qi}{2}e^{-t} \\
	(1+\qi)(1-e^{(\qj-1)t})e^{-t} &  \frac{1-\qi-\qj-\qk}{2}e^{-t}+\frac{1+\qi+\qj+\qk}{2}e^{(\qi-2)t}
	\end{pmatrix}
	\end{equation*}
	By straightforward computations,  we have the   result shown in Table  \ref{table-ex-4}.
	\begin{table}[ht]
		\centering
		\caption{Description of Example \ref{ex4}}{
			\begin{tabular}{ |@{} c | c | c@{} |}
				\hline
				Fundamental &  The standard  & \multirow{2}*{\centering Stability}\\
				matrix  & eigenvalues of $A$   &    \\ \hline
				\multirow{2}*{\centering $~\displaystyle\lim_{t\to \infty}\norm{M(t)}=0$ }   &  $\lambda_1=-1$, $\Re(\lambda_1)<0$; &   asymptotically~  \\
				&  $ \lambda_2 =-1+ \frac{\qi}{2}$, $\Re(\lambda_1)<0$ & stable  \\  \hline
		\end{tabular}}
		\label{table-ex-4}
	\end{table}
\end{example}

\section{Floquet theory for QDEs}\label{S4}

We consider  the  quaternionic periodic systems
\begin{equation}\label{q-periodic-systems}
\dot{\bx}=A(t) \bx
\end{equation}
where $A(t)$ is a $T$-periodic continuous quaternion-matrix-valued function. The following Floquet's theorem gives a canonical form for fundamental matrices of (\ref{q-periodic-systems}).

\begin{theorem}
	If $M(t)$ is a fundamental matrix of  (\ref{q-periodic-systems}). Then
	\begin{equation*}
	M(t+T)=M(t)M^{-1}(0)M(T).
	\end{equation*}
	In addition, it has the form
	\begin{equation}\label{floquetnormalform}
	M(t)=P(t)e^{tB}
	\end{equation}
	where P(t) is a $T$-periodic quaternion-matrix-valued function and $B$ satisfying
	\begin{equation*}
	e^{TB}=M^{-1}(0)M(T).
	\end{equation*}
\end{theorem}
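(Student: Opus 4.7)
The plan is to follow the classical Floquet argument, being careful to respect non-commutativity by keeping constant factors on the right of fundamental matrices.

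\textbf{Step 1: Set up the translated fundamental matrix.} Define $N(t) := M(t+T)$. Differentiating and using the $T$-periodicity of $A$ gives $\dot N(t) = A(t+T)M(t+T) = A(t)N(t)$, so $N$ is a matrix solution of (\ref{q-periodic-systems}). Since $M(t)$ is invertible for every $t$, so is $N(t)$, hence $N$ is itself a fundamental matrix.

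\textbf{Step 2: Relate the two fundamental matrices by a constant on the right.} Consider $\Phi(t) := M(t)^{-1}N(t)$. Using the identity $\frac{d}{dt}M^{-1} = -M^{-1}\dot M M^{-1}$ (which is valid over $\H$ and follows from differentiating $MM^{-1}=I$), one computes
\begin{equation*}
\dot\Phi(t) = -M^{-1}(t)\,A(t)\,M(t)\,M^{-1}(t)N(t) + M^{-1}(t)\,A(t)\,N(t) = 0.
\end{equation*}
Hence $\Phi(t)$ is a constant quaternion matrix $C$, so $M(t+T) = M(t)C$. Evaluating at $t=0$ yields $C = M^{-1}(0)M(T)$, which proves the first identity. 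Note that the order $M(t)C$ (rather than $CM(t)$) is forced by non-commutativity; writing the constant on the left would not produce a solution of the same equation.

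\textbf{Step 3: Extract the Floquet normal form.} Set $D := M^{-1}(0)M(T)$, which is invertible since it is a product of invertible quaternion matrices. By Lemma \ref{exp q-matrix eqn} there exists $B \in \H^{n\times n}$ with $e^{TB}=D$. Define $P(t) := M(t)e^{-tB}$. Since $-tB$ and $-TB$ commute, we have $e^{-(t+T)B} = e^{-TB}e^{-tB}$, and therefore
\begin{equation*}
P(t+T) = M(t+T)\,e^{-(t+T)B} = M(t)\,D\,D^{-1}\,e^{-tB} = M(t)\,e^{-tB} = P(t),
\end{equation*}
establishing the $T$-periodicity of $P$. Rearranging gives $M(t)=P(t)e^{tB}$, which is (\ref{floquetnormalform}).

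\textbf{Main obstacle.} The only non-routine point is the existence of a quaternionic logarithm of $D$, i.e.\ a matrix $B$ with $e^{TB}=D$; in the real/complex case this is standard, while over $\H$ one must invoke Lemma \ref{exp q-matrix eqn}, whose proof went through the complex adjoint representation $\chi$. Everything else reduces to carefully tracking the order of multiplication so that constant factors appear on the right of $M(t)$, which is the side compatible with the right $\H$-module structure of the solution space.
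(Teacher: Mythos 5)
Your proof is correct and follows essentially the same route as the paper: show $M(t+T)$ is again a fundamental matrix, write $M(t+T)=M(t)C$ with the constant on the right, invoke Lemma \ref{exp q-matrix eqn} for the quaternionic logarithm $e^{TB}=C$, and check that $P(t)=M(t)e^{-tB}$ is $T$-periodic. The only difference is that you explicitly verify the constancy of $M^{-1}(t)M(t+T)$ by differentiation, where the paper simply cites the standard fact that two fundamental matrices differ by a constant right factor; this is a welcome extra detail, not a different argument.
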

\begin{proof}
	Since $M(t)$ is a fundamental matrix of (\ref{q-periodic-systems}) and $A(t+T)=A(t)$, then
	\begin{equation*}
	\dot{M}(t+T)=A(t+T)M(t+T)=A(t)M(t+T).
	\end{equation*}
	That means $M(t+T)$ is also a fundamental matrix. Therefore, there is a nonsingular   quaternion matrix  $C$ such that $M(t+T)=M(t)C$. By Lemma  \ref{exp q-matrix eqn}, there is a quaternion matrix $B$ such that $C=e^{TB}$. Let $P(t):=M(t)e^{-tB}$, then
	\begin{equation*}
	P(t+T)=M(t+T)e^{-TB-tB}=M(t)C e^{TB}e^{-tB}=M(t)e^{-tB}=P(t)
	\end{equation*}
	and $M(t)=P(t)e^{tB}$. By letting $t=0$, we have  $ e^{TB}=C=M^{-1}(0)M(T)$ which completes the proof.
\end{proof}
\begin{remark}
	 In the above proof, we used the fact that if $A_1, A_2 \in \H^{n\times n}$ are commutable then $e^{A_1}e^{A_2}=e^{A_1+A_2}$. We know that this assertion  is true for complex matrices.  We now verify that this result is also valid for quaternion matrices. 
	 
	 If $A_1, A_2$ are commutable, so are $\chi_{A_1}$ $\chi_{A_2}$. By applying Lemma \ref{expmultiply} and \ref{exp q-matrix eqn}, we have that
	 \begin{equation*}
	 \chi_{e^{A_1}e^{A_2}}=\chi_{e^{A_1}}\chi_{e^{A_2}}=e^{\chi_{A_1}}e^{\chi_{A_2}}=e^{\chi_{A_1}+\chi_{A_2}}=e^{\chi_{(A_1+A_2)}}=\chi_{e^{A_1+A_2}}.
	 \end{equation*}
	 It follows that $e^{A_1}e^{A_2}=e^{A_1+A_2}$.
\end{remark}

\begin{corollary}\label{monodromy matrix}
	Suppose that $M_1(t)$, $M_2(t)$ are  fundamental   matrices of (\ref{q-periodic-systems}) and $e^{TB_1}=M_1^{-1}(0)M_1(T)$, $e^{TB_2}=M_2^{-1}(0)M_2(T)$. Then $e^{TB_1}$, $e^{TB_2}$ are similar and therefore they have the same standard eigenvalues.
\end{corollary}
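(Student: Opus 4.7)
The plan is to reduce the problem to the standard fact that any two fundamental matrices of the same linear QDE differ by right multiplication by a constant nonsingular quaternion matrix, and then push this relation through the identity $M_i(t+T)=M_i(t)e^{TB_i}$ established in the preceding theorem.

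First, I would invoke the right-module structure of the solution space of (\ref{q-periodic-systems}). Since $\H^n$ is a right $\H$-module and the columns of $M_1(t)$ form a right-linearly independent family of solutions spanning the full solution space, every column of $M_2(t)$ can be written as an $\H$-right linear combination of the columns of $M_1(t)$ with \emph{constant} coefficients. Collecting these coefficients into a matrix $S\in\H^{n\times n}$ yields $M_2(t)=M_1(t)S$, and $S$ must be nonsingular because $M_2(t)$ is a fundamental matrix (equivalently, $S=M_1^{-1}(0)M_2(0)$).

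Second, I would compute $M_2(t+T)$ in two ways. On the one hand, using the preceding theorem, $M_2(t+T)=M_2(t)e^{TB_2}=M_1(t)S e^{TB_2}$. On the other hand, $M_2(t+T)=M_1(t+T)S=M_1(t)e^{TB_1}S$. Equating the two expressions and cancelling the nonsingular factor $M_1(t)$ on the left gives $e^{TB_1}S=Se^{TB_2}$, so
\begin{equation*}
e^{TB_2}=S^{-1}e^{TB_1}S,
\end{equation*}
which is exactly the asserted similarity. The equality of standard eigenvalues then follows from Lemma \ref{Jordanform}: two similar quaternion matrices share the same Jordan canonical form in $\C^{n\times n}$, whose diagonal entries are by the remark after Lemma \ref{Jordanform} precisely the standard eigenvalues.

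The main obstacle is the very first step, namely the justification that $M_2(t)=M_1(t)S$ with $S$ constant. In the complex setting this is routine, but over the noncommutative ring $\H$ one must be careful that the coefficients appear on the \emph{right} (matching the convention $A\bmeta=\bmeta\lambda$ adopted in the paper) and that they are genuinely time-independent; this follows by differentiating $S(t):=M_1^{-1}(t)M_2(t)$ and using that both $M_1,M_2$ satisfy $\dot M=A(t)M$, which forces $\dot S(t)\equiv 0$. Once this is in place, the rest of the argument is a short algebraic manipulation.
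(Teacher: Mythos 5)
Your proof is correct and follows essentially the same route as the paper's: both arguments rest on the fact that two fundamental matrices of (\ref{q-periodic-systems}) differ by a constant nonsingular right factor, the paper phrasing this through the principal fundamental matrix $M_0(t)$ (showing each monodromy matrix is similar to $M_0(T)$) while you exhibit the conjugating matrix $S=M_1^{-1}(0)M_2(0)$ directly. Your justification that $S(t)=M_1^{-1}(t)M_2(t)$ is constant by differentiation is valid over $\H$ and correctly respects the right-module convention, so no gap remains.
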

\begin{proof}
	Let  $M_0(t)$ be  the fundamental matrix such that $M_0(0)=I$, then $M_1(t)=M_0(t)M_1(0)$ and $M_2(t)=M_0(t)M_2(0)$ for every $t\in \mathbb{R}$. Therefore $M_1(T)M_1^{-1}(0)=M_2(T)M_2^{-1}(0)=M_0(T)$. Note that both $M_1^{-1}(0)M_1(T)$  and $M_2^{-1}(0)M_2(T)$ are similar with $M_0(T)$. Thus  $e^{TB_1}$, $e^{TB_2}$ are similar and they possess the same standard eigenvalues.
\end{proof}

The representation (\ref{floquetnormalform}) is called a Floquet normal form for the fundamental matrix $M(t)$. From this normal form, we   accordingly  define several concepts for  quaternionic periodic system (\ref{q-periodic-systems}) as follows.
\begin{itemize}
	\item For any  fundamental matrix $M(t)$, $ e^{TB}=M^{-1}(0)M(T)$ is called a monodromy matrix of  (\ref{q-periodic-systems}). By Corollary \ref{monodromy matrix}, we see that any two monodromy matrices are similar.
	\item  The standard eigenvalues of  any monodromy matrix are called characteristic multipliers of (\ref{q-periodic-systems}). The totality of characteristic multipliers is denoted by ${CM}$.
	\item A complex number $\mu$ is called a characteristic exponent of (\ref{q-periodic-systems}), if $\rho$ is a characteristic multiplier and $e^{\mu T}=\rho$. The totality of characteristic exponents is denoted by ${CE}$.
\end{itemize}

\begin{theorem}\label{eigenvalue of B}
	Consider system (\ref{q-periodic-systems}),  suppose that $M(t)=P(t)e^{tB}$ is a Floquet norm form for the fundamental matrix $M(t)$. Let  $\mu_1,\mu_2,\cdots,\mu_n$ be the standard eigenvalues of $B$. Then  $\widetilde{\mu}_1,\widetilde{\mu}_1,\cdots,\widetilde{\mu}_n$  are characteristic exponents, where $\widetilde{\mu}_j$ ($j=1,2,\cdots,n$) is defined by
	\begin{equation*}
	\widetilde{\mu}_j:=
	\begin{cases}
	\mu_j,& \mathrm{if}~ e^{\mu_j T} ~\mathrm{has~ nonnegative ~imaginary~ part};\\
	\overline{\mu_j}, &\mathrm{otherwise}.
	\end{cases}
	\end{equation*}
	If $\mu$ is a characteristic exponent of (\ref{q-periodic-systems}), then there exists  $1\leq k\leq n$ such that  $\{e^{\mu T}\}\cap \{e^{\mu_k T}, e^{ \overline{\mu_k} T}\}\neq \emptyset$ and  $\Re(\mu)=\Re(\mu_k)$.
\end{theorem}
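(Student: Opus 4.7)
The plan is to apply Theorem \ref{spectral exponetial map} (the quaternionic spectral mapping theorem) to the matrix $TB$ and then match the result against the definitions of monodromy matrix, characteristic multiplier, and characteristic exponent introduced just before the theorem.

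First I would observe that the standard eigenvalues of $TB$ are exactly $T\mu_1,T\mu_2,\ldots,T\mu_n$, since scaling by the positive real number $T$ preserves the ``nonnegative imaginary part'' normalization used to single out a standard representative in each similarity class. Applying Theorem \ref{spectral exponetial map} to $A:=TB$ then shows that the standard eigenvalues of $e^{TB}$ are exactly $e^{T\widetilde{\mu}_1},\ldots,e^{T\widetilde{\mu}_n}$, where the tilde is defined by flipping to $\overline{\mu_j}$ precisely when $e^{T\mu_j}$ has negative imaginary part (this matches the definition given in the theorem statement, using $\overline{T\mu_j}=T\overline{\mu_j}$). Since $e^{TB}$ is a monodromy matrix of \eqref{q-periodic-systems}, its standard eigenvalues are by definition the characteristic multipliers; hence each $e^{T\widetilde{\mu}_j}$ is a characteristic multiplier, and by the defining relation $e^{\mu T}=\rho$ this exhibits each $\widetilde{\mu}_j$ as a characteristic exponent. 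This handles the first assertion.

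For the second assertion, let $\mu\in\mathbb{C}$ be any characteristic exponent. By definition, $\rho:=e^{\mu T}$ is a characteristic multiplier, i.e., a standard eigenvalue of the monodromy matrix $e^{TB}$. By the previous paragraph, the list of standard eigenvalues of $e^{TB}$ is $\{e^{T\widetilde{\mu}_k}:1\le k\le n\}$, so there exists $k$ with $e^{\mu T}=e^{T\widetilde{\mu}_k}$. Since $\widetilde{\mu}_k\in\{\mu_k,\overline{\mu_k}\}$, this immediately gives the intersection statement $\{e^{\mu T}\}\cap\{e^{\mu_k T},e^{\overline{\mu_k}T}\}\ne\emptyset$. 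Taking absolute values in $e^{\mu T}=e^{T\widetilde{\mu}_k}$ yields $e^{T\Re(\mu)}=e^{T\Re(\widetilde{\mu}_k)}$, and since $\Re(\overline{\mu_k})=\Re(\mu_k)$, we conclude $\Re(\mu)=\Re(\mu_k)$.

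The argument is essentially a bookkeeping exercise once the spectral mapping theorem and Corollary \ref{monodromy matrix} are in hand. The only subtlety worth being careful about is the interaction between the ``standard'' normalization (nonnegative imaginary part) applied to $B$ versus the one applied to $e^{TB}$: the eigenvalue $\mu_j$ of $B$ may itself be standard while $e^{T\mu_j}$ fails to have nonnegative imaginary part, which is precisely why the tilde operation must be inserted when passing from $B$ to $e^{TB}$. Making this point explicitly is the main step that distinguishes the quaternionic statement from its classical complex analogue; everything else reduces to rewriting $e^{\mu T}$ on both sides and comparing moduli.
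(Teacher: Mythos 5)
Your proposal is correct and follows essentially the same route as the paper: apply the spectral mapping theorem (Theorem \ref{spectral exponetial map}) to obtain the standard eigenvalues of the monodromy matrix $e^{TB}$ as $e^{T\widetilde{\mu}_j}$, identify these with the characteristic multipliers, and for the converse compare moduli in $e^{\mu T}=e^{T\widetilde{\mu}_k}$ to get $\Re(\mu)=\Re(\mu_k)$. Your explicit remark that the standard eigenvalues of $TB$ are $T\mu_j$ (because scaling by $T>0$ preserves the nonnegative-imaginary-part normalization) is a small point the paper leaves implicit, but it does not change the argument.
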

\begin{proof}
	If $\mu_1,\mu_2,\cdots,\mu_n$ are the standard eigenvalues of $B$,   from   Theorem \ref{spectral exponetial map}, $e^{\widetilde{\mu}_jT}$ ($j=1,2,\cdots,n$) is a  standard eigenvalue of $e^{TB}$. That is, $\{e^{\widetilde{\mu}_jT} :j=1,2,\cdots,n\}={CM}$.  Therefore  $\widetilde{\mu}_1,\widetilde{\mu}_1,\cdots,\widetilde{\mu}_n$  are characteristic exponents. If $\mu$ is  a characteristic exponent, then $\rho=e^{\mu  T}$ is a standard eigenvalue of $e^{TB}$. Hence there exists $1\leq k\leq n$, such that $\rho=e^{\widetilde{\mu}_k T}$. It follows that
	\begin{equation*}
	\{e^{\mu T}\}\cap \{e^{\mu_k T}, e^{ \overline{\mu_k} T}\}\neq \emptyset,
	\end{equation*}
	and
	$$e^{\Re(\mu)T}=\abs{e^{\mu  T}}=\abs{e^{\widetilde{\mu}_k T}}=e^{\Re(\widetilde{\mu}_k)T}=e^{\Re({\mu}_k)T}.$$
	Thus  $\Re(\mu)=\Re(\mu_k)$.
\end{proof}

As an immediate consequence of Theorem \ref{eigenvalue of B}, we  have the following result.

\begin{corollary}
	Consider  system (\ref{q-periodic-systems}), Let  $M(t)=P(t)e^{tB}$ be a Floquet norm form for the fundamental matrix $M(t)$. Then
	\begin{equation*}
	\{ \Re(\mu): \mu \in {CE}\}=\{ \Re(\mu): \mu \in \sigma(B)\}
	\end{equation*}
	where $\sigma(B)$ is the totality of the standard eigenvalues of $B$.
\end{corollary}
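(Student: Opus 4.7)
The plan is to prove the two set inclusions separately, and both will fall directly out of Theorem \ref{eigenvalue of B}, which is why the authors call this an immediate consequence. The only observation needed beyond that theorem is that conjugation does not change the real part, i.e.\ $\Re(\mu_j)=\Re(\overline{\mu_j})$, so the ``tilde'' operation $\mu_j\mapsto\widetilde{\mu}_j$ is invisible at the level of real parts.

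For the inclusion $\{\Re(\mu):\mu\in\sigma(B)\}\subseteq\{\Re(\mu):\mu\in CE\}$, I would let $\mu_k\in\sigma(B)$ and invoke the first assertion of Theorem \ref{eigenvalue of B}, which says that $\widetilde{\mu}_k$ is a characteristic exponent of (\ref{q-periodic-systems}). Since $\widetilde{\mu}_k$ is either $\mu_k$ or $\overline{\mu_k}$, we have $\Re(\widetilde{\mu}_k)=\Re(\mu_k)$, and so $\Re(\mu_k)$ lies in the right-hand set.

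For the reverse inclusion $\{\Re(\mu):\mu\in CE\}\subseteq\{\Re(\mu):\mu\in\sigma(B)\}$, I would start with an arbitrary characteristic exponent $\mu$ and apply the second assertion of Theorem \ref{eigenvalue of B}, which guarantees the existence of some $1\leq k\leq n$ with $\Re(\mu)=\Re(\mu_k)$. Since $\mu_k\in\sigma(B)$, this immediately places $\Re(\mu)$ in the set on the right.

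There is no substantive obstacle here: both inclusions are essentially a repackaging of the two halves of the preceding theorem, combined with the trivial fact $\Re(z)=\Re(\overline{z})$. The only mild care needed is to state explicitly that $\widetilde{\mu}_j\in\{\mu_j,\overline{\mu_j}\}$ guarantees $\Re(\widetilde{\mu}_j)=\Re(\mu_j)$, so one should not confuse the tilde operation (which can alter the imaginary part) with something that might shift the real part.
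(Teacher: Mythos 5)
Your proof is correct and is exactly the argument the paper intends: the paper gives no separate proof, presenting the corollary as an immediate consequence of Theorem \ref{eigenvalue of B}, and your two inclusions (the forward one from the first assertion, the reverse one from the second, together with $\Re(\widetilde{\mu}_j)=\Re(\mu_j)$) are precisely how that consequence is obtained.
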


\begin{theorem}
	If $\rho_j=e^{\mu_j T}$, $j=1,2,\cdots,n$, are the  characteristic multipliers of (\ref{q-periodic-systems}), then
	\begin{align}
	&\prod_{j=1}^{n} \abs{\rho_j}= \exp\left(\int_0^T \Re(\mathrm{tr} A(\tau))d\tau\right),  \label{characteristicmultipliers} \\
	& \Re\left(\sum_{j=1}^n\mu_j\right)=
	\frac{1}{T}\left(\int_0^T\Re\left( \mathrm{tr}A(\tau)\right)d\tau\right). \label{characteristicexponent}
	\end{align}
\end{theorem}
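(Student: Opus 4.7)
The plan is to transfer the problem to the $2n\times 2n$ complex adjoint and then invoke the classical Abel--Jacobi--Liouville formula. Let $M(t)$ be any fundamental matrix of (\ref{q-periodic-systems}) and set $Y(t):=\chi_{M(t)}\in\C^{2n\times 2n}$. Since the complex adjoint map is $\mathbb{R}$-linear in its argument, differentiation commutes with $\chi$, and Lemma \ref{expmultiply} gives
\begin{equation*}
\dot Y(t)=\chi_{\dot M(t)}=\chi_{A(t)M(t)}=\chi_{A(t)}\,\chi_{M(t)}=\chi_{A(t)}\,Y(t).
\end{equation*}
Thus $Y(t)$ is a (complex) fundamental matrix for a standard linear system, and the classical Liouville formula yields
\begin{equation*}
\abs{\chi_{M(T)}}=\abs{\chi_{M(0)}}\,\exp\!\left(\int_0^T \mathrm{tr}\,\chi_{A(\tau)}\,d\tau\right).
\end{equation*}

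Next I would compute $\mathrm{tr}\,\chi_{A(\tau)}$ from the block form (\ref{complex adjoint matrix}). Writing $A(\tau)=A_1(\tau)+A_2(\tau)\qj$ with $A_1,A_2\in\C^{n\times n}$, the trace of $\chi_{A(\tau)}$ is $\mathrm{tr}(A_1(\tau))+\mathrm{tr}(\overline{A_1(\tau)})=2\,\Re\bigl(\mathrm{tr}\,A(\tau)\bigr)$, where I use that the vector part $A_2(\tau)\qj$ contributes no scalar part to $\mathrm{tr}\,A$. With $C=M^{-1}(0)M(T)=e^{TB}$ the monodromy factor, Lemma \ref{expmultiply} gives $\chi_C=\chi_{M(0)}^{-1}\chi_{M(T)}$, hence
\begin{equation*}
\abs{\chi_C}=\exp\!\left(2\int_0^T \Re\bigl(\mathrm{tr}\,A(\tau)\bigr)\,d\tau\right).
\end{equation*}
By Theorem \ref{thm of q matrix}(1)--(2), the spectrum of $\chi_C$ (counted with multiplicity) is $\{\rho_1,\overline{\rho_1},\ldots,\rho_n,\overline{\rho_n}\}$, so $\abs{\chi_C}=\prod_{j=1}^n\rho_j\overline{\rho_j}=\prod_{j=1}^n\abs{\rho_j}^2$. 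Taking positive square roots yields identity (\ref{characteristicmultipliers}).

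For (\ref{characteristicexponent}) I would simply use $\rho_j=e^{\mu_j T}$ so that $\abs{\rho_j}=e^{\Re(\mu_j)T}$. Substituting into (\ref{characteristicmultipliers}) and taking the real logarithm gives $T\sum_{j=1}^n \Re(\mu_j)=\int_0^T \Re(\mathrm{tr}\,A(\tau))\,d\tau$, which is exactly (\ref{characteristicexponent}).

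The only genuine obstacle is bookkeeping: one must verify that the complex adjoint $\chi$ commutes with differentiation (immediate from the block definition) and that the eigenvalues of $\chi_C$ pair up as $\{\rho_j,\overline{\rho_j}\}$, so that $\det\chi_C$ equals $\prod\abs{\rho_j}^2$ rather than $\prod\rho_j^2$. Both points are handled by the preliminaries (Lemma \ref{expmultiply} and Theorem \ref{thm of q matrix}), after which the rest is a direct transcription of the classical Liouville argument.
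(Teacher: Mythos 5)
Your proposal is correct and follows essentially the same route as the paper: both reduce identity (\ref{characteristicmultipliers}) to the statement that the $q$-determinant of the monodromy matrix equals $\prod_j\abs{\rho_j}^2$ on one side and $\exp\bigl(2\int_0^T\Re(\mathrm{tr}A(\tau))\,d\tau\bigr)$ on the other, and then obtain (\ref{characteristicexponent}) from $\abs{\rho_j}=e^{\Re(\mu_j)T}$. The only difference is that the paper cites the quaternionic Liouville formula from \cite{kou2015linear2} as a known result, whereas you re-derive it inline by passing to $\chi_{M(t)}$ and applying the classical Liouville formula together with $\mathrm{tr}\,\chi_{A}=2\Re(\mathrm{tr}A)$ — a harmless, self-contained expansion of the same argument (note only that the conjugate pairing of the spectrum of $\chi_C$ is really supplied by Lemma \ref{Jordanform} rather than Theorem \ref{thm of q matrix}(2)).
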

\begin{proof}
	Let $M(t)$ be a fundamental matrix of (\ref{q-periodic-systems}), by  Liouville's formula of QDEs (see \cite{kou2015linear2}), we have
	\begin{equation}\label{Liouville}
	\abs{M(t)}_q=\exp\left(2\int_{t_0}^t \Re(\mathrm{tr} A(\tau))d\tau\right)\abs{M(t_0)}_q.
	\end{equation}
	Note that $\rho_j$, $j=1,2,\cdots,n$, are the standard eigenvalues of $M(T)M^{-1}(0)$, by the definition of $q$-determinant, we have
	\begin{equation*}
	\abs{M(T)}_q\abs{M(0)}_q^{-1}=\abs{M(T)M^{-1}(0)}_q=\prod_{j=1}^{n} \abs{\rho_j}^2.
	\end{equation*}
	Taking $t=T, t_0=0$ in (\ref{Liouville}), we obtain
	\begin{equation*}
	\prod_{j=1}^{n} \abs{\rho_j}^2=  \exp\left(2\int_{0}^T \Re(\mathrm{tr} A(\tau))d\tau\right),
	\end{equation*}
	and therefore (\ref{characteristicmultipliers}) holds. Observe that $\abs{\rho_j}=\abs{e^{\mu_j T}}=e^{\Re(\mu_j) T}$, then (\ref{characteristicmultipliers}) implies that
	\begin{equation*}
	\exp\left(\Re\left(\sum_{j=1}^n\mu_j\right)T\right)=\exp\left(\int_0^T  \Re(\mathrm{tr} A(\tau))d\tau\right).
	\end{equation*}
	This proves the theorem.
\end{proof}

If $\rho=e^{\mu T}$, where $\rho, \mu$ are complex numbers. Since $\abs{\rho}=\abs{e^{\mu T}}=e^{\Re(\mu) T}$, it is easy to see that the following assertions hold.
\begin{itemize}
	\item $\abs{\rho}=1$ if and only if $\Re(\mu)=0$.
	\item $\abs{\rho}<1$ if and only if $\Re(\mu)<0$.
	\item $\abs{\rho}>1$ if and only if $\Re(\mu)>0$.
\end{itemize}

The next result demonstrates that the stability of  (\ref{q-periodic-systems}) is equivalent to the stability of the linear system with constant coefficients $\dot{\by}=B \by$, where $B$ stems from the Floquet normal form (\ref{floquetnormalform}).

\begin{theorem}\label{stablility of quaternionic periodic system}
	Let   $M(t)=P(t)e^{tB}$ is a Floquet norm form for the fundamental matrix $M(t)$ of (\ref{q-periodic-systems}). Then the following assertions hold.
	\begin{enumerate}
		\item The system (\ref{q-periodic-systems}) is stable if and only if the standard eigenvalues of $B$ all have non-positive real parts and  the algebraic multiplicity equals the geometric multiplicity of each standard eigenvalue with   zero real part; or equivalently, the characteristic multipliers of (\ref{q-periodic-systems}) all have modulus not larger than $1$ ($\leq1$) and  the algebraic multiplicity equals the geometric multiplicity of each characteristic multiplier with    modulus one.
		\item  The system (\ref{q-periodic-systems}) is  asymptotically stable if and only if the standard eigenvalues of $B$ all have negative  real parts; or equivalently, the characteristic multipliers of (\ref{q-periodic-systems}) all have modulus less than $1$.
	\end{enumerate}
\end{theorem}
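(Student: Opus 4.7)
The plan is to reduce the stability analysis of the periodic system to that of the autonomous system $\dot{\by} = B\by$ by exploiting the Floquet factorization $M(t) = P(t)e^{tB}$, and then to invoke Theorem \ref{stablility of constant system}. Since $P(t)$ is continuous and $T$-periodic, both $P(t)$ and $P^{-1}(t)$ are bounded on $\mathbb{R}$; fix $K>0$ with $\norm{P(t)}, \norm{P^{-1}(t)}\le K$ for all $t$. Applying submultiplicativity of the matrix norm to the identities $M(t)=P(t)e^{tB}$ and $e^{tB}=P^{-1}(t)M(t)$, I obtain the two-sided estimate
$$K^{-1}\norm{M(t)}\le \norm{e^{tB}}\le K\norm{M(t)},$$
so $\norm{M(t)}$ is bounded (respectively tends to $0$) if and only if $\norm{e^{tB}}$ is.

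Applying Theorem \ref{judging theorem} to both $\dot{\bx}=A(t)\bx$ (with fundamental matrix $M(t)$) and $\dot{\by}=B\by$ (with fundamental matrix $e^{tB}$), the previous estimate shows that system (\ref{q-periodic-systems}) is stable, respectively asymptotically stable, if and only if $\dot{\by}=B\by$ is. Theorem \ref{stablility of constant system} then translates each of these into the stated condition on the standard eigenvalues of $B$.

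The equivalent formulation in terms of characteristic multipliers follows from Theorem \ref{eigenvalue of B}: if $\mu_1,\dots,\mu_n$ are the standard eigenvalues of $B$, then the characteristic multipliers are $\rho_j=e^{\widetilde{\mu}_j T}$, where $\widetilde{\mu}_j\in\{\mu_j,\overline{\mu_j}\}$ and in particular $\Re(\widetilde{\mu}_j)=\Re(\mu_j)$. Since $\abs{\rho_j}=e^{\Re(\mu_j)T}$, the real-part conditions on the $\mu_j$ correspond exactly to the modulus conditions on the $\rho_j$: strictly negative real part translates to modulus less than $1$, zero real part to modulus one, and non-positive real part to modulus at most $1$.

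The most delicate step is the equivalence of the algebraic-equals-geometric multiplicity condition on the two sides. For this I would rely on the Jordan canonical form in $\mathbb{C}^{n\times n}$ supplied by Lemma \ref{Jordanform}: writing each Jordan block of $B$ as $\mu I+N$ with $N$ nilpotent, exponentiation gives $e^{\mu T}(I+TN+\tfrac{T^2}{2}N^2+\cdots)$, a block of the same size whose nilpotent part vanishes precisely when $N=0$. Consequently a Jordan block of size greater than one for $\mu$ in the Jordan form of $B$ forces a Jordan block of size greater than one for $e^{\mu T}$ in the Jordan form of $e^{TB}$, and conversely. When two distinct eigenvalues $\mu,\mu'$ of $B$ collide under the exponential, each contributes $1\times 1$ blocks, so the algebraic-equals-geometric property at a multiplier $\rho$ of modulus one is equivalent to the same property at every preimage $\mu$ with $\Re(\mu)=0$. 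This correspondence, which is the main obstacle, closes the equivalence between the formulation in terms of $B$ and that in terms of the characteristic multipliers.
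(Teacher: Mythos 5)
Your proof is correct and follows exactly the reduction the paper intends: the paper states this theorem without a written proof, relying on the preceding remark that the Floquet normal form makes the periodic system's stability equivalent to that of $\dot{\by}=B\by$, which is then handled by Theorem \ref{stablility of constant system}. Your two-sided norm estimate via the boundedness of $P(t)$ and $P^{-1}(t)$, together with the Jordan-block bookkeeping for the multiplier reformulation, supplies precisely the details the paper leaves implicit.
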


\begin{theorem}\label{periodic sol}
	If $\mu$ is a characteristic exponent and $\rho=e^{\mu T}$ is a characteristic multiplier of (\ref{q-periodic-systems}), then there is a nontrivial solution of the form
	\begin{equation*}
	\bx(t)=\bp (t)e^{\mu t}.
	\end{equation*}
	Moreover  $\bp(t+T)=\bp(t)$ and $\bx(t+T)=\bx(t)\rho$.
\end{theorem}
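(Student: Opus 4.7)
The plan is to construct the desired solution directly from a right eigenvector of the monodromy matrix, using the Floquet normal form $M(t)=P(t)e^{tB}$. Since $\rho$ is a characteristic multiplier of (\ref{q-periodic-systems}), it is by definition a standard eigenvalue of the monodromy matrix $e^{TB}$, and Theorem~\ref{thm of q matrix} supplies a nonzero right eigenvector $\bmeta\in\H^n$ with $e^{TB}\bmeta = \bmeta\rho$. I would then set
\begin{equation*}
\bx(t) := M(t)\bmeta = P(t)e^{tB}\bmeta,
\end{equation*}
which is automatically a solution of (\ref{q-periodic-systems}); it is nontrivial because $\bmeta\neq 0$ and $M(t)$ is nonsingular for every $t$.

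Next, I would define $\bp(t) := \bx(t)e^{-\mu t}$, so that the representation $\bx(t)=\bp(t)e^{\mu t}$ is built in by construction. To verify that $\bp$ is $T$-periodic and that $\bx(t+T)=\bx(t)\rho$, I would invoke (i) the periodicity $P(t+T)=P(t)$; (ii) the identity $e^{(t+T)B}=e^{tB}e^{TB}$, which holds because $tB$ and $TB$ commute and, as established in the remark following the Floquet theorem, $e^{A_1}e^{A_2}=e^{A_1+A_2}$ for commuting quaternion matrices; and (iii) the eigenvector relation together with $\rho=e^{\mu T}$. These three ingredients immediately give
\begin{equation*}
\bx(t+T)=P(t)e^{tB}e^{TB}\bmeta = P(t)e^{tB}\bmeta\rho = \bx(t)\rho,
\end{equation*}
and then $\bp(t+T)=\bx(t)\rho\, e^{-\mu t}e^{-\mu T}=\bx(t)e^{\mu T}e^{-\mu t}e^{-\mu T}$.

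The main subtlety, and the only place where noncommutativity might threaten the argument, is the collapse of this last product to $\bx(t)e^{-\mu t}$. In a noncommutative setting one cannot freely permute scalar factors past the quaternion vector $\bmeta$ hidden inside $\bx(t)$; however, because $\mu$ is complex, the three factors $e^{\mu T}$, $e^{-\mu t}$, $e^{-\mu T}$ all lie in the commutative subfield $\C_{\qi}\subset\H$, so by associativity and the commutativity of complex multiplication their product equals $e^{\mu T-\mu t-\mu T}=e^{-\mu t}$ regardless of the quaternionic vector sitting to its left. This collapse yields $\bp(t+T)=\bp(t)$, completing the argument. Had $\mu$ been a genuine non-complex quaternion, this cancellation would fail and a different strategy (for instance, a Jordan reduction of $B$ itself) would be required.
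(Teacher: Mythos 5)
Your proof is correct, and it takes a genuinely different route from the paper's. The paper works with an eigenvector of $B$ itself: it invokes Theorem \ref{eigenvalue of B} to produce a standard eigenvalue $\mu_1$ of $B$ with $e^{\mu_1 T}=\rho$, writes $\mu_1=\mu+\frac{2k\pi\qi}{T}$, pushes the eigenvector through the exponential via $e^{tB}\bmeta=\bmeta e^{\mu_1 t}$, and thereby obtains the periodic factor in the explicit closed form $\bp(t)=P(t)\bmeta e^{\frac{2k\pi\qi t}{T}}$. You instead take a right eigenvector of the monodromy matrix $e^{TB}$ for $\rho$ directly, define $\bp(t):=\bx(t)e^{-\mu t}$, and deduce periodicity of $\bp$ from the functional equation $\bx(t+T)=\bx(t)\rho$. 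Your route is leaner: it needs only the definition of a standard (right) eigenvalue, not the spectral mapping Theorem \ref{spectral exponetial map} behind Theorem \ref{eigenvalue of B}, and it sidesteps the paper's slightly glossed ``without loss of generality $e^{\mu T}=e^{\mu_1 T}$'' step (which in the other case would require an eigenvector of $B$ for $\overline{\mu_1}$ rather than $\mu_1$). What the paper's version buys in exchange is an explicit formula for $\bp(t)$ exhibiting its structure as $P(t)$ times an eigenvector times a unimodular complex oscillation, which is then reused in Example \ref{ex5}. Your handling of the one genuine noncommutativity hazard --- keeping all complex scalar factors $e^{\pm\mu t}, e^{\pm\mu T}, \rho$ to the right of the quaternionic vector and collapsing them only among themselves inside $\C_{\qi}$ --- is exactly right, and mirrors the care the paper takes at the same spot.
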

\begin{proof}
	Let   $M(t)=P(t)e^{tB}$ is a Floquet norm form  for the principal fundamental matrix $M(t)$ at $t=0$. By Theorem \ref{eigenvalue of B}, there is a standard eigenvalue $\mu_1$ of $B$ such that $$\{e^{\mu T}\}\cap \{e^{\mu_1 T}, e^{ \overline{\mu_1} T}\}\neq \emptyset.$$
	Without loss of generality, we assume that $\rho=e^{\mu T}=e^{\mu_1 T}$. Then there exists a $k\in\mathbb{Z}$ such that  $\mu_1=\mu+ \frac{2k\pi\qi}{T}$. Let $\bmeta\neq 0$ be an eigenvector of $B$ corresponding to $\mu_1$. It follows that $B\bmeta=\bmeta \mu_1$ and therefore $e^{tB}\bmeta=\bmeta e^{\mu_1 t}$. Thus the solution $\bx(t):=M(t)\bmeta$ can also be represented in the form
	\begin{equation*}
	\bx(t)=P(t)e^{tB}\bmeta=P(t)\bmeta e^{\frac{2k\pi\qi t}{T}}e^{\mu t}.
	\end{equation*}
	Let $\bp(t)=P(t)\bmeta e^{\frac{2k\pi\qi t}{T}}$. It is easy to see that
	$\bp(t)$ is a $T$-periodic function. Moreover
	\begin{equation*}
	\bx(t+T)=\bp(t+T)e^{\mu(t+T)}=\bp(t)e^{\mu t}e^{\mu T}=\bx(t) \rho.
	\end{equation*}
	This completes the proof.
\end{proof}
\begin{theorem}\label{periodic sol inverse}
	If $\mu$ is a complex number,  $\bp(t+T)=\bp(t)$, and $\bx(t)=\bp(t)e^{\mu t}\neq 0$    is a nontrivial solution of  (\ref{q-periodic-systems}), then one of $\mu, \overline{\mu}$ is a characteristic exponent.
\end{theorem}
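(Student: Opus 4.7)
The plan is to use the given solution to force an eigenvalue equation for the monodromy matrix, and then convert a \emph{right} eigenvalue of that matrix into a \emph{standard} eigenvalue (i.e., a characteristic multiplier) via the equivalence-class description of quaternionic spectra.

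First I would take $M(t)$ to be the principal fundamental matrix of (\ref{q-periodic-systems}) at $t=0$, so that $M(0)=I$ and the monodromy matrix is simply $M(T)$. Since $\bx$ solves (\ref{q-periodic-systems}) one has $\bx(t)=M(t)\bx(0)=M(t)\bp(0)$. Evaluating at $t=T$ and using $\bp(T)=\bp(0)$ yields
\begin{equation*}
M(T)\bp(0)=\bx(T)=\bp(T)e^{\mu T}=\bp(0)e^{\mu T}.
\end{equation*}
Because $\bx\neq 0$ forces $\bp(0)\neq 0$, this exhibits the complex number $e^{\mu T}$ as a right eigenvalue of $M(T)$ with right eigenvector $\bp(0)$.

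Next I would connect this right eigenvalue to a characteristic multiplier. By Lemma~\ref{Jordanform} there is an invertible $P\in\H^{n\times n}$ with $P^{-1}M(T)P=J\in\C^{n\times n}$ upper triangular whose diagonal entries are the standard eigenvalues of $M(T)$, that is, the characteristic multipliers of (\ref{q-periodic-systems}). Setting $\by:=P^{-1}\bp(0)\neq 0$, the equation $M(T)\bp(0)=\bp(0)e^{\mu T}$ becomes $J\by=\by e^{\mu T}$. Letting $k$ be the largest index with $y_k\neq 0$, the $k$-th row gives $J_{kk}y_k=y_k e^{\mu T}$, whence $J_{kk}=y_k(e^{\mu T})y_k^{-1}$. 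Thus $e^{\mu T}$ lies in the equivalence class $[J_{kk}]$, where $J_{kk}$ is one of the characteristic multipliers.

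To finish I would use the characterization $[q]=\{p\in\H:\Re(p)=\Re(q),\,|\Ve(p)|=|\Ve(q)|\}$ noted in Section~\ref{S2.1}. The only complex numbers in $[e^{\mu T}]$ are $e^{\mu T}$ and its conjugate $\overline{e^{\mu T}}=e^{\overline{\mu}T}$, and exactly one of these has nonnegative imaginary part; since $J_{kk}$ is complex with nonnegative imaginary part and lies in this class, it must be equal to whichever of $e^{\mu T},\,e^{\overline{\mu}T}$ has nonnegative imaginary part. In the first case $\mu$ itself is a characteristic exponent; in the second case $\overline{\mu}$ is.

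The only real step requiring care is the middle one, namely that a complex right eigenvalue of a quaternion matrix must be $\H$-similar to one of the (finitely many) standard eigenvalues. The triangularization afforded by Lemma~\ref{Jordanform} together with the ``last nonzero coordinate'' trick handles this cleanly; the remainder of the argument is essentially bookkeeping on the equivalence class $[e^{\mu T}]$.
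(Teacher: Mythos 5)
Your proposal is correct and follows essentially the same route as the paper: both derive the eigenvalue identity $M(T)\bp(0)=\bp(0)e^{\mu T}$ (the paper writes it as $\bmeta e^{\mu T}=e^{TB}\bmeta$ via the Floquet normal form, which is the same thing since $e^{TB}=M^{-1}(0)M(T)=M(T)$) and then conclude that one of $e^{\mu T}$, $e^{\overline{\mu}T}$ is a characteristic multiplier. The only difference is that you spell out, via Lemma~\ref{Jordanform} and the last-nonzero-coordinate argument, the step that a complex right eigenvalue must be similar to a standard eigenvalue --- a step the paper leaves implicit --- which is a welcome addition but not a different proof.
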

\begin{proof}
	Let   $M(t)=P(t)e^{tB}$ be a Floquet norm form  for the principal fundamental matrix $M(t)$ at $t=0$ and $\bmeta=\bp(0)$, then $\bmeta\neq 0$. Otherwise, $\bx(t)\equiv 0$ is the trivial solution by  uniqueness of solution. Note that both $\bp(t)e^{\mu t}$ and $P(t)e^{t B}\bmeta$ are solutions of (\ref{q-periodic-systems}) with the same initial value at $t=0$, therefore
	\begin{equation}\label{same-initial}
	\bp(t)e^{\mu t}=P(t)e^{t B}\bmeta
	\end{equation}
	Taking $t=T$ in (\ref{same-initial}) and note that
	$\bp(T)=\bp(0)=\bmeta$, $P(T)=P(0)=I$    by periodicity. It follows that
	\begin{equation*}
	\bmeta e^{\mu T}=e^{TB}\bmeta.
	\end{equation*}
	Hence $e^{\mu T}$ is a complex-valued eigenvalue of $e^{TB}$. Thus, one of $e^{\mu T}$, $e^{\overline{\mu} T}$ is a  characteristic multiplier of (\ref{q-periodic-systems}). Therefore, one of $\mu, \overline{\mu}$ is a characteristic exponent of (\ref{q-periodic-systems}).
\end{proof}

Next result is  a direct consequence of Theorem  \ref{periodic sol} and \ref{periodic sol inverse}.
\begin{corollary}\label{T-periodic-or-2T-periodic}
	There is a $T$-periodic solution of (\ref{q-periodic-systems})
	if and only if there is a zero characteristic exponent; or equivalently, there is a characteristic  multiplier  $\rho=1$. If there is a  characteristic exponent of the form $\mu=\frac{2k+1}{T}\pi \qi$ for some $k\in \mathbb{Z}$, or equivalently, there is a characteristic  multiplier  $\rho=-1$, then  there is a $2T$-periodic solution of (\ref{q-periodic-systems}).
\end{corollary}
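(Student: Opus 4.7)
The plan is to derive both halves of the corollary by matching the periodicity requirements with the two preceding theorems. A $T$-periodic solution is exactly a solution of the form $\bp(t)e^{\mu t}$ with $\mu=0$ and $\bp$ of period $T$, while a solution satisfying $\bx(t+T)=-\bx(t)$ is (automatically) of period $2T$; since $\bx(t+T)=\bx(t)\rho$ in the conclusion of Theorem \ref{periodic sol}, the latter corresponds to $\rho=-1$, equivalently to a characteristic exponent of the form $\mu=(2k+1)\pi\qi/T$. Once these correspondences are set, the corollary becomes a direct application of Theorems \ref{periodic sol} and \ref{periodic sol inverse}, and only bookkeeping remains.

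For the first equivalence, I would argue in both directions. \emph{Sufficiency}: if $\rho=1$ is a characteristic multiplier then $\mu=0$ is a characteristic exponent (since $e^{0\cdot T}=1$), so Theorem \ref{periodic sol} produces a nontrivial solution $\bx(t)=\bp(t)e^{0\cdot t}=\bp(t)$ with $\bp(t+T)=\bp(t)$, which is a $T$-periodic solution. \emph{Necessity}: if $\bx(t)$ is a nontrivial $T$-periodic solution, take $\mu=0$ and $\bp(t)=\bx(t)$ so that $\bx(t)=\bp(t)e^{\mu t}$; Theorem \ref{periodic sol inverse} then asserts that one of $\mu,\overline{\mu}$ is a characteristic exponent, and since $\mu=\overline{\mu}=0$ we conclude $0\in CE$, equivalently $1\in CM$.

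For the second assertion, suppose $\mu=(2k+1)\pi\qi/T$ is a characteristic exponent for some $k\in\mathbb{Z}$. Then the associated characteristic multiplier is $\rho=e^{\mu T}=e^{(2k+1)\pi\qi}=-1$. Theorem \ref{periodic sol} yields a nontrivial solution $\bx(t)=\bp(t)e^{\mu t}$ with $\bp(t+T)=\bp(t)$ and $\bx(t+T)=\bx(t)\rho=-\bx(t)$. Iterating once gives
\begin{equation*}
\bx(t+2T)=\bx(t+T)\rho=-\bx(t+T)=\bx(t),
\end{equation*}
so $\bx$ is $2T$-periodic, as claimed.

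There is no substantial obstacle here; the only minor point demanding care is the ambiguity between $\mu$ and $\overline{\mu}$ in Theorem \ref{periodic sol inverse}, which is harmless in the first part because $0$ is self-conjugate. I would not claim a converse for the $2T$-statement, since a $2T$-periodic solution need not force $\rho=-1$ (it could already be $T$-periodic, giving $\rho=1$); the corollary as stated only asserts one implication, matching exactly what the two theorems deliver.
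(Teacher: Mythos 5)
Your proof is correct and follows exactly the route the paper intends: the paper states this corollary without proof as a ``direct consequence'' of Theorems \ref{periodic sol} and \ref{periodic sol inverse}, and your argument is precisely the straightforward unpacking of those two theorems (with the right observation that $\mu=0$ is self-conjugate, so the $\mu$ versus $\overline{\mu}$ ambiguity in Theorem \ref{periodic sol inverse} is harmless). Your closing remark that the $2T$ statement is only one implication also matches the corollary as stated.
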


The following result shows that different characteristic  multipliers will generate linearly independent solutions.
\begin{corollary}
	Assume that  $\mu_1, \mu_2$ are characteristic exponents of (\ref{q-periodic-systems}) satisfying $\rho_1=e^{\mu_1 T}$, $\rho_2=e^{\mu_2 T}$. If the characteristic  multipliers $\rho_1$, $\rho_2$ are not equal, then there are $T$-periodic functions $\bp_1(t)$, $\bp_2(t)$ such that
	\begin{equation*}
	\bx_1(t)=\bp_1(t)e^{\mu_1 t}
	\end{equation*}
	and
	\begin{equation*}
	\bx_2(t)=\bp_2(t)e^{\mu_2 t}
	\end{equation*}
	are linearly independent solutions of (\ref{q-periodic-systems}).
\end{corollary}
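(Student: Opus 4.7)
My plan is to derive the two candidate solutions directly from Theorem \ref{periodic sol} and then reduce the question of right linear independence over $\H$ to a similarity relation between the multipliers $\rho_1$ and $\rho_2$, which will be incompatible with $\rho_1\neq \rho_2$ because characteristic multipliers, being standard eigenvalues, are complex numbers with nonnegative imaginary parts. Applying Theorem \ref{periodic sol} to each characteristic exponent $\mu_i$ gives a nontrivial solution $\bx_i(t)=\bp_i(t)e^{\mu_i t}$ with $\bp_i$ $T$-periodic and satisfying the shift identity $\bx_i(t+T)=\bx_i(t)\rho_i$. Uniqueness of the initial value problem for (\ref{q-periodic-systems}) guarantees $\bx_i(t)\neq 0$ for every $t\in\mathbb{R}$, a fact I will need at the final algebraic step.

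For the independence, I would argue by contradiction: suppose there exist $c_1,c_2\in\H$, not both zero, with $\bx_1(t)c_1+\bx_2(t)c_2\equiv 0$. By symmetry assume $c_1\neq 0$. Multiplying on the right by $c_1^{-1}$ and setting $\alpha:=c_2 c_1^{-1}$, the dependence relation becomes $\bx_1(t)+\bx_2(t)\alpha = 0$. The key move is to replace $t$ by $t+T$; using $\bx_i(t+T)=\bx_i(t)\rho_i$ this produces the companion identity $\bx_1(t)\rho_1+\bx_2(t)\rho_2\alpha = 0$. Substituting $\bx_1(t)=-\bx_2(t)\alpha$ from the first identity into the second yields $\bx_2(t)\bigl(\rho_2\alpha-\alpha\rho_1\bigr)=0$. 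Since $\bx_2(t)\neq 0$ for all $t$ and $\H$ is a division ring, the scalar factor must vanish, giving $\rho_2\alpha=\alpha\rho_1$.

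It then remains to dispose of the two cases for $\alpha$. If $\alpha=0$ then $c_2=0$, and the original relation collapses to $\bx_1(t)c_1\equiv 0$ with $c_1\neq 0$, contradicting $\bx_1\not\equiv 0$. If $\alpha\neq 0$, then $\rho_2=\alpha\rho_1\alpha^{-1}$, so $\rho_1$ and $\rho_2$ lie in the same similarity class. Invoking the characterization $[q]=\{p\in\H:\Re(p)=\Re(q),\,\abs{\Ve(p)}=\abs{\Ve(q)}\}$ recorded in Section \ref{S2.1}, similar quaternions share both real part and modulus of the vector part; since $\rho_1,\rho_2$ are complex numbers with nonnegative imaginary parts, similarity forces $\rho_1=\rho_2$, contradicting the hypothesis. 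The delicate point I expect is precisely this middle step: because the constants $c_i$ lie in $\H$ while the $\rho_i$ are complex, one must be meticulous about keeping multiplications on the correct side when propagating through the $T$-shift and the substitution, so that the argument delivers the clean commutation $\rho_2\alpha=\alpha\rho_1$ rather than a tangled expression from which similarity cannot be read off.
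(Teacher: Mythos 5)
Your proof is correct, but it takes a genuinely different route from the paper's. The paper constructs $\bx_i(t)=M(t)\bmeta_i$ with $\bmeta_i=\bx_i(0)$ and argues (as in Theorem \ref{periodic sol inverse}) that $\bmeta_1,\bmeta_2$ are right eigenvectors of the monodromy matrix $e^{TB}$ corresponding to the distinct standard eigenvalues $\rho_1,\rho_2$; it then asserts that such eigenvectors are right linearly independent, whence the solutions are independent because $M(t)$ is invertible. That last assertion is a standard fact about quaternion matrices (eigenvectors belonging to non-similar right eigenvalues are independent), but the paper invokes it without proof. Your argument instead works directly at the level of the solution functions: from a hypothetical right dependence $\bx_1(t)+\bx_2(t)\alpha\equiv 0$ you propagate through the period shift $\bx_i(t+T)=\bx_i(t)\rho_i$, obtain $\bx_2(t)\left(\rho_2\alpha-\alpha\rho_1\right)=0$, and since $\bx_2(t)\neq 0$ everywhere (by uniqueness) and $\H$ has no zero divisors, conclude $\rho_2\alpha=\alpha\rho_1$; the case $\alpha\neq 0$ forces $\rho_1$ and $\rho_2$ to be similar, which for two complex numbers with nonnegative imaginary parts forces equality via the characterization of $[q]$ in Section \ref{S2.1}, a contradiction. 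In effect you have supplied, in this special case, a self-contained proof of the eigenvector-independence lemma the paper leaves implicit; what you lose is brevity, what you gain is that every step is justified from results actually stated in the paper. Your care in keeping all scalar multiplications on the right is exactly what makes the commutation relation $\rho_2\alpha=\alpha\rho_1$ come out cleanly, and the explicit treatment of the degenerate case $\alpha=0$ closes the remaining gap.
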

\begin{proof}
	Let   $M(t)=P(t)e^{tB}$ be a Floquet norm form  for the principal fundamental matrix $M(t)$ at $t=0$ and $\bmeta_1=\bx_1(0)$, $\bmeta_2=\bx_2(0)$. By similar arguments of Theorem
	\ref{periodic sol inverse}, we conclude that $\bmeta_1$, $\bmeta_2$ are eigenvectors of $B$ corresponding to the standard eigenvalues $\rho_1$, $\rho_2$ respectively. Note that $\rho_1\neq\rho_2$. It follows that $\bx_1(0)$ and $\bx_2(0)$ are linearly independent and therefore $\bx_1(t)$ and $\bx_2(t)$  are linearly independent solutions of (\ref{q-periodic-systems}).
\end{proof}

\begin{example}\label{ex5}
	Consider the system  (\ref{q-periodic-systems}), where $A(t)$ is $\pi$-periodic function and given by
	\begin{equation*}
	A(t)= \begin{pmatrix}
	1&1\\
	0 &\qi+2e^{2\qi t}\qj
	\end{pmatrix}
	\end{equation*}
	Then the principal fundamental matrix is
	\begin{equation*}
	M(t)= \begin{pmatrix}
	e^t& \frac{-1+\qi-\qj-\qk}{4}e^{\qj t}+\frac{-1-3\qi-3\qj+\qk}{20}e^{3\qj t}+\frac{3-\qi+4\qj+2\qk}{10}e^t\\
	0 &e^{\qi t}e^{2\qj t}
	\end{pmatrix}.
	\end{equation*}
	By straightforward computations, we have $\lim_{t\to \infty}\norm{M(t)}=\infty$. That is, $\norm{M(t)}$ is unbounded. Thus this system is unstable by Theorem \ref{judging theorem}.
	Observe that $M(0)=I$ and
	\begin{equation*}
	M(\pi)= \begin{pmatrix}
	e^{\pi}& \frac{3-\qi+4\qj+2\qk}{10}(1+e^{\pi})\\
	0 &-1
	\end{pmatrix}.
	\end{equation*}
	Therefore the characteristic multipliers are $\rho_1=e^{\pi}$, $\rho_2=-1$.
	From Lemma \ref{exp q-matrix eqn}, there is a quaternion-valued matrix
	\begin{equation*}
	B= \begin{pmatrix}
	1&  \frac{1-2\qi+\qj+ 3\qk}{5}\\
	0 &\qi
	\end{pmatrix}
	\end{equation*}
	such that  $M(\pi)=e^{\pi B}$. Applying the definition of exponential function,
	\begin{equation*}
	e^{tB}= \begin{pmatrix}
	e^t&  \frac{3- \qi+ 4 \qj+ 2\qk}{10} (e^t-e^{\qi t})\\
	0 &e^{\qi t}
	\end{pmatrix}
	\end{equation*}
	Then we obtain  the Floquet norm form $P(t)e^{tB}$  for   $M(t)$, where  $P(t)$ is given by
	\begin{equation*}
	P(t)= \begin{pmatrix}
	1&  \frac{3- \qi+ 4 \qj+ 2\qk}{10} +\frac{-1+\qi-\qj-\qk}{4}e^{\qj t}e^{-\qi t}+\frac{-1-3\qi-3\qj+\qk}{20} e^{3\qj t}e^{-\qi t}\\
	0 &\cos 2t +e^{2\qi t}\qj\sin 2t
	\end{pmatrix}
	\end{equation*}
	It is easy to see that $P(t)$ is $\pi$-periodic as required. The standard eigenvalues of $B$ are $\mu_1=1$, $\mu_2=\qi$ and the corresponding eigenvectors are
	\begin{equation*}
	\bmeta_1= \begin{pmatrix}
	1 \\
	0
	\end{pmatrix}~~~\mathrm{and}~~~ \bmeta_2= \begin{pmatrix}
	-\frac{7+\qi+10\qj}{10} \\
	2+\qi
	\end{pmatrix}.
	\end{equation*}
	Note that $\mu_1, \mu_2$ are characteristic exponents. By Theorem \ref{periodic sol}, there are two nontrivial solutions
	\begin{equation*}
	\bx_1(t)=
	M(t)\bmeta_1=\bp_1(t)e^t
	~~\mathrm{and}~~ \bx_2(t)=M(t)\bmeta_2=\bp_2(t)e^{\qi t},
	\end{equation*}
	where $\bp_1(t), \bp_2(t)$ are $\pi$-periodic functions given by
	\begin{equation*}
	\bp_1(t)= \begin{pmatrix}
	1 \\
	0
	\end{pmatrix}~~\mathrm{and}~~ \bp_2(t)= \begin{pmatrix}
	\frac{-1+\qi-\qj-\qk}{4}e^{\qj t}e^{-\qi t}(2+\qi)+\frac{-1-3\qi-3\qj+\qk}{20} e^{3\qj t}e^{-\qi t}(2+\qi) \\
	(2+\qi)\cos2t+2 e^{2\qi t}(\qj+\qk)\sin2t
	\end{pmatrix}.
	\end{equation*}
	By Corollary \ref{T-periodic-or-2T-periodic}, $\bx_2(t)$ is a $2\pi$-periodic solution. To provide a direct description of the system, Table \ref{table-ex-5}  is presented to visualize its properties.
	\begin{table}[ht]
		\centering
		\caption{Description of Example \ref{ex5}}{
			\begin{tabular}{| @{} c | c | c| c@{}|}
				\hline
				~Fundamental & Characteristic&  The standard  & \multirow{2}*{\centering Stability~}\\
				matrix  &  multipliers & eigenvalues of $B$   &    \\ \hline
				$\norm{M(t)}$ is  &  $\rho_1=e^{\pi}$, $\abs{\rho_1}>1$;&  $\mu_1=1$, $\Re(\mu_1)>0$; & \multirow{2}*{\centering unstable~} \\
				unbounded  &$\rho_2=-1$, $\abs{\rho_2}=1$&  $\mu_2=\qi$, $\Re(\mu_2)=0$ &  \\ \hline
		\end{tabular}}
		\label{table-ex-5}
	\end{table}
\end{example}

\begin{example}\label{ex6}
	Consider the system  (\ref{q-periodic-systems}), where $A(t)$ is $\pi$-periodic function and is given by
	\begin{equation*}
	A(t)= \begin{pmatrix}
	\qk&1\\
	0 &\qi+2e^{2\qi t}\qj
	\end{pmatrix}
	\end{equation*}
	Then the principal fundamental matrix $M(t)$ is
	\begin{equation*}
	\begin{pmatrix}
	e^{\qk t}& \frac{1-\qi-\qj+\qk}{4}\sin t +\frac{1+\qi+\qj+\qk}{4}e^{ \qj t} t +\frac{1+\qi-\qj-\qk}{4}e^{2\qj t}\sin t + \frac{1-\qi-\qj+\qk}{16}(e^{3\qj t}-e^{-\qj t})\\
	0 &e^{\qi t}e^{2\qj t}
	\end{pmatrix}.
	\end{equation*}
	By straightforward computations, $\norm{M(t)}$ is unbounded. Thus this system is unstable by Theorem \ref{judging theorem}.
	Observe that $M(0)=I$ and
	\begin{equation*}
	M(\pi)= \begin{pmatrix}
	-1& - \frac{1+\qi+ \qj+ \qk}{4} \pi\\
	0 &-1
	\end{pmatrix}.
	\end{equation*}
	Therefore the characteristic multipliers are $\rho_1= \rho_2=-1$.
	There is a quaternion-valued matrix
	\begin{equation*}
	B= \begin{pmatrix}
	-\qi& \frac{1+\qi+ \qj+ \qk}{4}\\
	0 &-\qk
	\end{pmatrix}
	\end{equation*}
	such that  $M(\pi)=e^{\pi B}$. The standard eigenvalues of $B$ are $\mu_1=  \mu_2=\qi$. To provide a direct description of the system, Table \ref{table-ex-6} is presented to visualize its properties. By some basic calculations, we obtain the Jordan canonical form of $B$:
	\begin{equation*}
	J=\begin{pmatrix}
	\qi& 1\\
	0 &\qi
	\end{pmatrix}.
	\end{equation*}
	This implies that the geometric multiplicity of $\mu=\qi$ is $1$, which is less than its algebraic multiplicity.
	\begin{table}[ht]
		\centering
		\caption{Description of Example \ref{ex6}}{
			\begin{tabular}{ |@{} c | c | c| c @{}|}
				\hline
				~Fundamental & Characteristic&  The standard  & \multirow{2}*{\centering Stability~}\\
				matrix  &  multipliers & eigenvalues of $B$   &    \\ \hline
				$\norm{M(t)}$ is  &  $\rho_1=\rho_2=-1$;&  $\mu_1=\mu_2=\qi$; & \multirow{2}*{\centering unstable~} \\
				unbounded  &$\abs{\rho_1}=\abs{\rho_2}=1 $ &   $\Re(\mu_1)=\Re(\mu_2)=0$ &  \\ \hline
		\end{tabular}}
		\label{table-ex-6}
	\end{table}
\end{example}

\begin{example}\label{ex7}
	Consider the system  (\ref{q-periodic-systems}), where $A(t)$ is $\pi$-periodic function and given by
	\begin{equation*}
	A(t)= \begin{pmatrix}
	\frac{\qk}{2}&e^{-2\qi t}\\
	0 &\qi+2\qj\cos 2t+2\qk\sin 2t
	\end{pmatrix}
	\end{equation*}
	Then the principal fundamental matrix $M(t)$ is
	\begin{equation*}
	\begin{pmatrix}
	e^{\frac{\qk}{2} t}&  \frac{-2+2\qi+5\qj-5\qk}{21}e^{-\frac{\qj}{2}t} +\frac{2+2\qi+3\qj+3\qk}{5}e^{ \frac{\qj}{2} t}   -\frac{1+2\qi+2\qj-\qk}{4}e^{\qj t} + \frac{1+6\qi-6\qj-\qk}{35} e^{3\qj t} \\
	0 &e^{\qi t}e^{2\qj t}
	\end{pmatrix}.
	\end{equation*}
	It is easy to see that $\norm{M(t)}$ is  bounded but is not convergent to zero as $t$ tends to infinity.  Thus this system is  stable (but not asymptotically) by Theorem \ref{judging theorem}.
	Observe that $M(0)=I$ and
	\begin{equation*}
	M(\pi)= \begin{pmatrix}
	\qk&  -\frac{2}{35}-\frac{12}{35}\qi + \frac{4}{3}\qj + \frac{2}{3}\qk\\
	0 &-1
	\end{pmatrix}.
	\end{equation*}
	Therefore the characteristic multipliers are $\rho_1=\qi$, $\rho_2=-1$.
	There is a quaternion-valued matrix
	\begin{equation*}
	B= \begin{pmatrix}
	\frac{\qk}{2}& \frac{33-76\qi-12\qj+104\qk }{105}\\
	0 &\qi
	\end{pmatrix}
	\end{equation*}
	such that  $M(\pi)=e^{\pi B}$. The standard eigenvalues of $B$ are $\mu_1= \frac{\qi}{2}$, $\mu_2=\qi$. To provide a direct description of the system, Table \ref{table-ex-7} is presented to visualize its properties.
	\begin{table}[ht]
		\centering
		\caption{Description of Example \ref{ex7}}{
			\begin{tabular}{| @{} c | c | c| c @{}|}
				\hline
				Fundamental & Characteristic&  The standard  & \multirow{2}*{\centering Stability}\\
				matrix  &  multipliers & eigenvalues of $B$   &    \\ \hline
				$\norm{M(t)}$ is unbounded  &  $\rho_1=\qi\neq -1= \rho_2$;&  $\mu_1= \frac{\qi}{2} \neq \qi=\mu_2$; & stable but not \\
				~but not convergent to $0$ &$\abs{\rho_1}=\abs{\rho_2}=1$&  $\Re(\mu_1)= \Re(\mu_2)=0$ &   asymptotically~ \\ \hline
		\end{tabular}}\label{table-ex-7}
	\end{table}
\end{example}

\begin{example}\label{ex8}
	Consider the system  (\ref{q-periodic-systems}), where $A(t)$ is $\pi$-periodic function and given by
	\begin{equation*}
	A(t)= \begin{pmatrix}
	\frac{\qi}{2}-1 &e^{2\qj t}e^{-\qk \sin 2t}\\
	0 &2\qk \cos 2t -1
	\end{pmatrix}
	\end{equation*}
	Then the principal fundamental matrix $M(t)$ is
	\begin{equation*}
	\begin{pmatrix}
	e^{\frac{\qi}{2} t}e^{-t}&   \frac{ 1}{5}(e^{-(1+2\qi)t} - e^{(\frac{\qi}{2}-1)t})(\qi-\qj) + \frac{1 }{3}(e^{(\frac{\qi}{2}-1)t}-e^{(2\qi-1)t})(\qi+\qj) \\
	0 & e^{-t}e^{\qk \sin 2t}
	\end{pmatrix}.
	\end{equation*}
	It is easy to see that $\lim_{t\to \infty}\norm{M(t)}=0$.  Thus this system is asymptotically stable  by Theorem \ref{judging theorem}.
	Observe that $M(0)=I$ and
	\begin{equation*}
	M(\pi)= \begin{pmatrix}
	\qi e^{-\pi}&  \frac{-2-2\qi-8\qj+8\qk}{15}e^{-\pi}\\
	0 &  e^{-\pi}
	\end{pmatrix}.
	\end{equation*}
	Therefore the characteristic multipliers are $\rho_1=\qi e^{-\pi}$, $\rho_2= e^{-\pi}$.
	There is a quaternion-valued matrix
	\begin{equation*}
	B= \begin{pmatrix}
	\frac{\qi}{2}-1& \frac{-1+4\qk }{15}\\
	0 & -1
	\end{pmatrix}
	\end{equation*}
	such that  $M(\pi)=e^{\pi B}$. The standard eigenvalues of $B$ are $\mu_1= \frac{\qi}{2}-1$, $\mu_2=-1$. To provide a direct description of the system, Table \ref{table-ex-8} is presented to visualize its properties.
	\begin{table}[ht]
		\centering
		\caption{Description of Example \ref{ex8}}{
			\begin{tabular}{ |@{} c | c | c| c @{}|}
				\hline
				Fundamental & Characteristic&  The standard  & \multirow{2}*{\centering Stability}\\
				matrix  &  multipliers & eigenvalues of $B$   &    \\ \hline
				\multirow{2}*{\centering $~\displaystyle\lim_{t\to \infty}\norm{M(t)}=0$ }&  $\rho_1=\qi e^{-\pi}$, $\abs{\rho_1}<1$;&  $\mu_1=\frac{\qi}{2}-1$, $\Re(\mu_1)<0$; & asymptotically~ \\
				&$\rho_2=e^{-\pi}$, $\abs{\rho_2}<1$&  $\mu_2=-1$, $\Re(\mu_2)<0$ &  stable\\ \hline
		\end{tabular}}\label{table-ex-8}
	\end{table}
\end{example}

\begin{remark}
	Thanks to the assertion 2 of Theorem \ref{thm of q matrix}, the above results  are coincide with the   traditional results when $A(t)$ is complex-valued.
\end{remark}

\section{Quaternion-valued Hill's equations}\label{S5}

For real-valued  systems, the Floquet theory effectively depict the stability of Hill's equation (see e.g \cite{chicone2006ordinary})
\begin{equation*}\label{Hill eqn}
\ddot{u} +a(t)u=0,~~a(t)=a(t+T).
\end{equation*}
We will consider the quaternion case where $a(t)$  is a quaternion-valued function. Let $\bx=(u,u')^T$, then  quaternion-valued Hill's equation is equivalent to the quaternionic periodic systems
(\ref{q-periodic-systems}) with
\begin{equation*}
A(t)=\begin{pmatrix}
0& 1\\
-a(t) &0
\end{pmatrix}.
\end{equation*}

Let $M(t)$ be the principal fundamental matrix at $t=0$.  By  Liouville's formula of QDEs,  we have
\begin{equation*}
\abs{M(t)}_q=\exp\left(2\int_{t_0}^t \Re(\mathrm{tr} A(\tau))d\tau\right)\abs{M(0)}_q=1.
\end{equation*}

If $a(t)$ is real-valued, then $M(T)$ is a real-valued matrix. If $\alpha=\alpha_1+\qi \alpha_2$ and $\beta=\beta_1+\qi \beta_2$ are roots of the equation
\begin{equation}\label{character eqn}
\lambda^2-(\mathrm{tr}M(T))\lambda+\abs{M(T)}_q= \lambda^2-(\mathrm{tr}M(T))\lambda+1=0.
\end{equation}
Then $\rho_1=\alpha_1+\qi \abs{\alpha_2}$ and $\rho_2=\beta_1+\qi \abs{\beta_2}$ are characteristic multipliers of (\ref{q-periodic-systems}) and $\abs{\rho_1} =\abs{\alpha}$, $\abs{\rho_2} =\abs{\beta}$. It is well-known that the stability of real-valued Hill's equation depends on the value of $\mathrm{tr}M(T)$ (see e.g \cite{chicone2006ordinary}).
\begin{table}[ht]
	\centering
	\caption{Description of Real-valued Hill's equation}{
		\begin{tabular}{| @{} c | c | c @{} |}
			\hline
			The value  &The roots &    Stability  of real-valued \\
			of   $\mathrm{tr}M(T)$ &  of (\ref{character eqn})   &   Hill's equation  \\ \hline
			$\mathrm{tr}M(T)<-2$ &  $\alpha<-1<\beta<0$; & unstable  \\   \hline
			$-2<\mathrm{tr}M(T)<2$  &$\beta=\overline{\alpha}$, $\abs{\alpha}=1$, $\Im(\alpha)\neq 0$; & stable but not asymptotically \\ \hline
			$ \mathrm{tr}M(T)=2$  &$\beta= \alpha=1$; & stable if and only if $M(T)=I$ \\ \hline
			$ \mathrm{tr}M(T)>2$  &   $0<\alpha<1<\beta$; & unstable  \\ \hline
			$ \mathrm{tr}M(T)=-2$  &$\beta= \alpha=-1$; & stable if and only if $M(T)=-I$~ \\ \hline
	\end{tabular}}\label{real Hill}
\end{table}

If $a(t)$ is quaternion-valued, then $M(T)$ is a quaternion matrix. Therefore   we can not use (\ref{character eqn}) to find the characteristic multipliers (the standard eigenvalues of $M(T)$). In this case, $\mathrm{tr}M(T)$ is a quaternion.  The structure of the set of zeros of quaternionic polynomials is more complicated than complex polynomials. It is natural  to modify (\ref{character eqn}) to be
\begin{equation}\label{character eqn 2}
\lambda^2-\Re(\mathrm{tr}M(T))\lambda+\abs{M(T)}_q=0.
\end{equation}
This raises the question of whether the roots of (\ref{character eqn 2}) and characteristic multipliers   possess   the same absolute value. The answer is negative. This implies that even if we add $ \Re$ to the front of $\mathrm{tr}M(T)$, the stability of quaternion-valued Hill's equation can not  be determined by Table \ref{real Hill}.
\begin{example}\label{ex-5-1}
	Consider the  quaternion-valued Hill's equation with $a(t)=2+\qj\cos^2 2t+\qk \sin 2t$. Note that $a(t)$ is a quaternion-valued $\pi$-periodic function. Based on the numerical methods, we obtain $M(\pi)\approx\begin{pmatrix} m_1&m_2\\ m_3&m_4 \end{pmatrix}$, where
	\begin{equation*}
	\begin{cases}
	m_1= -0.131186+0.037757\qi+0.584454\qj-0.418119\qk,   \\
	m_2= -0.607206+0.255374\qi-0.025292\qj,  \\
	m_3= 1.900430+0.005637\qi+0.173381\qj,   \\
	m_4= -0.131186+0.037757\qi+0.584454\qj+ 0.418119\qk.
	\end{cases}
	\end{equation*}
	Therefore, by direct computations, we have $ \Re(\mathrm{tr}M(\pi))\approx -0.262372\in (-2,2)$. The  characteristic multipliers are $\rho_1\approx -0.197803+1.73905\qi$ and $\rho_2\approx -0.064569 +0.567682\qi$. Note that $\abs{\rho_1}>1$, thus this equation is unstable. On the other hand, the roots of $\lambda^2- \Re(\mathrm{tr}M(\pi))+\abs{M(\pi)}_q\approx\lambda^2+0.262372\lambda+1=0$ are $\alpha\approx 0.131186  +0.991358 \qi$ and $\beta=\overline{\alpha}$.
\end{example}

In fact, if $\rho_1$, $\rho_2$ are  characteristic multipliers, we only have
\begin{equation}\label{character eqn 3}
\begin{cases}
\Re(\rho_1)+\Re(\rho_2)  =  \Re(\mathrm{tr}M(T)),\\
\abs{\rho_1 } \abs{\rho_2} =\abs{M(T)}_q=1.
\end{cases}
\end{equation}
If $\abs{ \Re(\mathrm{tr}M(T))}>2$, then one of $\abs{\Re(\rho_1)}$ and $\abs{ \Re(\rho_2)}$ has to be larger than $1$. In this case, the equation is unstable. By similar arguments, we could know the stability of quaternion-valued Hill's equation when  $\abs{ \Re(\mathrm{tr}M(T))}=2$.
In summary,    Table \ref{quaternion Hill} is presented to visualize  the  stability of quaternion-valued Hill's equation.
\begin{table}[ht]
	\centering
	\caption{Description of quaternion-valued Hill's equation}{
		\begin{tabular}{ |@{} c  | c @{} |}
			\hline
			The value   &    Stability  of quaternion-valued \\
			of   $ \Re(\mathrm{tr}M(T))$    &   Hill's equation  \\ \hline
			$\abs{ \Re(\mathrm{tr}M(T))}>2$ & unstable  \\   \hline
			$\abs{\Re(\mathrm{tr}M(T))}<2$  & undetermined \\ \hline
			$  \Re(\mathrm{tr}M(T))=2$  & stable if and only if $M(T)=I$ \\ \hline
			$  \Re(\mathrm{tr}M(T))=-2$  & stable if and only if $M(T)=-I$~ \\ \hline
	\end{tabular}}\label{quaternion Hill}
\end{table}

We use the following example to illustrate (\ref{character eqn 3}) and Table \ref{quaternion Hill}.
\begin{example}\label{ex-5-2}
	Consider the  quaternion-valued Hill's equation with $a(t)=-1+\qj\cos  2t+\qk \sin 2t$.  Based on the numerical methods, we obtain $M(\pi)\approx\begin{pmatrix} m_1&m_2\\ m_3&m_4 \end{pmatrix}$, where
	\begin{equation*}
	\begin{cases}
	m_1=13.6488 -2.9075\qi -1.1093\qj -2.3529\qk,   \\
	m_2=12.3192 -2.2187 \qi+2.3529\qj,  \\
	m_3=14.6721-2.2187\qi-5.2605 \qj,   \\
	m_4=13.6488-2.9075\qi -1.1093\qj + 2.3529\qk.
	\end{cases}
	\end{equation*}
	Therefore, by direct computation, we have the following result (Table \ref{table-ex-10}).
	\begin{table}[ht]
		\centering
		\caption{Description of Example \ref{ex-5-2}}{
			\begin{tabular}{ | @{} c | c | c @{} | }
				\hline
				The value  & Characteristic  & \multirow{2}*{\centering Stability~}\\
				of   $ \Re(\mathrm{tr}M(\pi))$  &  multipliers   &    \\ \hline
				\multirow{2}*{\centering $ \Re(\mathrm{tr}M(\pi))\approx 27.2976 >2$ }&  $\rho_1\approx 27.2621 + 4.96756 \qi$, $\abs{\rho_1}>1$;  &  \multirow{2}*{\centering  unstable~}\\
				&$\rho_2 \approx 0.0355 + 0.0065 \qi$, $\abs{\rho_1}\abs{\rho_2}\approx 1$  &  \\ \hline
		\end{tabular}}\label{table-ex-10}
	\end{table}
\end{example}

For the case of $\abs{\Re(\mathrm{tr}M(T))}<2$,  the scalar part  of  $\mathrm{tr}M(T)$ is not enough to determine the stability of quaternion-valued Hill's equation. To take the vector part of $\mathrm{tr}M(T)$ into account, however, we still can't   determine the stability of quaternion-valued Hill's equation at this moment.  This raises the question: can we determine the stability of quaternion-valued Hill's equation by $\mathrm{tr}M(T)$ (including scalar and vector parts)? If yes, how to determine the stability of quaternion-valued Hill's equation by $\mathrm{tr}M(T)$? 

Multiplying $M(T)$ by its conjugate transpose $M(T)^{\dag}$ we construct a positive semidefinite matrix $K(T):=M(T)M(T)^{\dag}$. It is easy to see that the eigenvalues of $K(T)$ are $\kappa_1=\abs{\rho_1}^2, \kappa_2=\abs{\rho_2}^2$. Note that $\abs{\rho_1}\abs{\rho_2} = 1$ and  $\mathrm{tr}K(T)=\norm{M(T)}_F^2$ where $\norm{\cdot}_F$ is the Frobenius norm. It follows that
$\kappa_1, \kappa_2$ are solutions of $\lambda^2- \norm{M(T)}_F^2 \lambda+1=0$.  Then we have the following result.
\begin{theorem}\label{Fnorm}
	 If $\norm{M(T)}_F^2>2$, then quaternion-valued Hill's equation is unstable.
\end{theorem}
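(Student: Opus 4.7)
The plan is to leverage the quadratic characterization of the eigenvalues $\kappa_1,\kappa_2$ of $K(T)$ that is already set up in the paragraph preceding the theorem. Concretely, I will show that the hypothesis $\|M(T)\|_F^2>2$ forces one of the characteristic multipliers to have modulus strictly greater than $1$, and then instability follows directly from Theorem \ref{stablility of quaternionic periodic system}.

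First I would observe that since $K(T)=M(T)M(T)^{\dag}$ is Hermitian positive semidefinite, the eigenvalues $\kappa_1,\kappa_2$ are nonnegative real. Using the two relations already established, namely $\kappa_1+\kappa_2=\|M(T)\|_F^2$ and $\kappa_1\kappa_2=|\rho_1|^2|\rho_2|^2=1$, we see that $\kappa_1,\kappa_2$ are the two roots of
\begin{equation*}
\lambda^2-\|M(T)\|_F^2\,\lambda+1=0.
\end{equation*}
Under the hypothesis $\|M(T)\|_F^2>2$, the discriminant $\|M(T)\|_F^4-4$ is strictly positive, so the roots are distinct positive reals. Since their product equals $1$, they cannot both lie in $(0,1]$ (otherwise their sum would be at most $2$, with equality only if both equal $1$, which contradicts distinctness). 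Therefore, after relabeling if necessary, $\kappa_1>1>\kappa_2>0$.

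Translating back to characteristic multipliers, this gives $|\rho_1|^2=\kappa_1>1$, i.e.\ $|\rho_1|>1$. By assertion 1 of Theorem \ref{stablility of quaternionic periodic system}, stability requires every characteristic multiplier to satisfy $|\rho|\le1$; since this fails for $\rho_1$, the quaternion-valued Hill's equation is unstable, completing the proof.

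There is no substantive obstacle here: once one accepts the identification of the eigenvalues of $K(T)$ with $|\rho_1|^2,|\rho_2|^2$ (which the authors take as already clear from the preceding discussion) and invokes Liouville's formula to get $\kappa_1\kappa_2=1$, the argument reduces to a one-line analysis of a quadratic with positive roots whose product is $1$. The only point one should be careful about is that the Frobenius norm used here is the quaternionic one induced by $M(T)^{\dag}$, so that $\mathrm{tr}\,K(T)=\|M(T)\|_F^2$ is a genuine nonnegative real number and the quadratic in $\lambda$ really has real coefficients.
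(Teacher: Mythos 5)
Your write-up is a faithful expansion of the paragraph the authors place immediately before the theorem (the paper offers no separate proof), and the quadratic bookkeeping is fine: $\kappa_1+\kappa_2=\norm{M(T)}_F^2>2$ together with $\kappa_1\kappa_2=1$ does force $\kappa_1>1>\kappa_2$. The genuine gap is exactly the step you elect to ``accept'' from the authors, namely that the eigenvalues of $K(T)=M(T)M(T)^{\dag}$ are $\abs{\rho_1}^2,\abs{\rho_2}^2$. The eigenvalues of $MM^{\dag}$ are the \emph{squared singular values} of $M$, and these coincide with the squared moduli of the (standard) eigenvalues only when $M$ is normal, which the monodromy matrix of a Hill equation generally is not. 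The inequality that is actually available, $\max_j\abs{\rho_j}\le\sqrt{\kappa_1}$ (spectral radius bounded by the largest singular value), points the wrong way: $\kappa_1>1$ gives no lower bound on any $\abs{\rho_j}$, so instability does not follow.

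This is not a presentational issue that a more careful write-up could repair; the statement itself fails, already in the real special case. Take $a(t)\equiv\omega^2$ with $\omega$ large, regarded as a $T$-periodic coefficient with $\sin\omega T\neq0$. Then
\begin{equation*}
M(T)=\begin{pmatrix}\cos\omega T& \omega^{-1}\sin\omega T\\ -\omega\sin\omega T&\cos\omega T\end{pmatrix},\qquad \norm{M(T)}_F^2=2\cos^2\omega T+\bigl(\omega^2+\omega^{-2}\bigr)\sin^2\omega T>2,
\end{equation*}
yet $\ddot u+\omega^2u=0$ is the harmonic oscillator: all solutions are bounded, both characteristic multipliers equal $\cos\omega T+\qi\,\abs{\sin\omega T}$ with modulus one, and $M(T)$ is diagonalizable, so the system is stable by Theorem \ref{stablility of quaternionic periodic system}. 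The correct reading of the preceding paragraph is only that $\kappa_1,\kappa_2$ are the squared singular values of $M(T)$, whose product is $1$ by Liouville's formula; identifying them with $\abs{\rho_1}^2,\abs{\rho_2}^2$ is where both the paper's argument and yours break down, and you should flag this rather than defer to the authors on it.
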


By direct computation, we have that $\abs{\Re(\mathrm{tr}M(T))}< \norm{M(T)}_F^2$. It turns out that $\abs{\Re(\mathrm{tr}M(T))}>2$ implies $\norm{M(T)}_F^2>2$. In Example \ref{ex-5-1},  $\abs{\Re(\mathrm{tr}M(T))}=0.262372<2$ and $\norm{M(T)}_F^2=5.14637>2$. It means that the stability of Example \ref{ex-5-1} can be determined by Theorem \ref{Fnorm}. In fact, for some  quaternion-valued Hill's equations with $\abs{\Re(\mathrm{tr}M(T))}<2$, the corresponding $\norm{M(T)}_F^2$ can be very large.
\begin{example}\label{ex-5-4}
	Consider the  quaternion-valued Hill's equation with $a(t)=-1+\qj e^{\cos  2t}+\qk \sin 2t$.   
Based on the numerical methods, we have the following result (Table \ref{table-ex-5-4}).
	\begin{table}[ht]
		\centering
		\caption{Description of Example \ref{ex-5-4}}{
			\begin{tabular}{ | @{} c | c | c @{} | }
				\hline
				The value of & Characteristic  & \multirow{2}*{\centering Stability~}\\
				  ~ $\abs{\Re(\mathrm{tr}M(\pi))}$ and $\norm{M(T)}_F^2$ &  multipliers   &    \\ \hline
				$\abs{\Re(\mathrm{tr}M(\pi))}\approx 1.0394<2$ &  $\rho_1\approx -1.03876 + 40.196  \qi$, $\abs{\rho_1}>1$;  &  \multirow{2}*{\centering  unstable~}\\
				$\norm{M(T)}_F^2\approx  1942.77>2$ 	&$\rho_2 \approx -0.0006425 + 0.024862 \qi$, $\abs{\rho_1}\abs{\rho_2}\approx 1$  &  \\ \hline
		\end{tabular}}\label{table-ex-5-4}
	\end{table}
\end{example}

\section{Conclusions}\label{S6}

The Floquet theory for QDEs  is developed, which  coincides with the classical Floquet theory when considering  ODEs. The concepts of characteristic multipliers and characteristic exponents for QDEs are introduced. The newly obtained results are useful to determine the stability of quaternionic periodic systems. As an important example of applications of Floquet theory for QDEs, we discuss the  stability of  quaternion-valued Hill's equation in detail.
It is shown that some results of real-valued Hill's equation  are  invalid for  the quaternion-valued Hill's equation. Throughout the paper,     adequate examples are provided to support the results.

%
%
%
%
%
%
%

\bibliographystyle{IEEEtran}
\bibliography{IEEEabrv,myreference20181128}

\end{document}